\DeclareRobustCommand\widecheck[1]{{\mathpalette\@widecheck{#1}}}
\def\@widecheck#1#2{%
    \setbox\z@\hbox{\m@th$#1#2$}%
    \setbox\tw@\hbox{\m@th$#1%
       \widehat{%
          \vrule\@width\z@\@height\ht\z@
          \vrule\@height\z@\@width\wd\z@}$}%
    \dp\tw@-\ht\z@
    \@tempdima\ht\z@ \advance\@tempdima2\ht\tw@ \divide\@tempdima\thr@@
    \setbox\tw@\hbox{%
       \raise\@tempdima\hbox{\scalebox{1}[-1]{\lower\@tempdima\box
\tw@}}}%
    {\ooalign{\box\tw@ \cr \box\z@}}}
\newcommand{\dfn}[1]{\textbf{\textit{#1}}}
\theoremstyle{plain}
\newtheorem{theorem}{Theorem}[section]
\newtheorem{proposition}[theorem]{Proposition}
\newtheorem{lemma}[theorem]{Lemma}
\theoremstyle{definition}
\newtheorem{definition}[theorem]{Definition}
\newtheorem{example}[theorem]{Example}
\newcommand\restr[2]{{% we make the whole thing an ordinary symbol
  \left.\kern-\nulldelimiterspace % automatically resize the bar with \right
  #1 % the function
  \right|_{#2} % this is the delimiter
}}
\newcommand{\R}{\mathbb{R}}
\renewcommand{\S}{\mathrm{S}}
\renewcommand{\d}{\mathrm{d}}
\newcommand{\Cinfty}{\mathscr{C}^\infty}
\newcommand{\T}{\mathrm{T}}
\newcommand{\Id}{\mathrm{Id}}
\newcommand{\Lie}{\mathscr{L}}
\newcommand{\vf}{\mathfrak{X}}
\newcommand{\totder}[2]{\frac{\mathrm{d} #1}{\mathrm{d} #2}}
\newcommand{\parder}[2]{\frac{\partial #1}{\partial #2}}
\DeclareMathAlphabet{\mathpzc}{OT1}{pzc}{m}{it}
\def\d{\mathrm{d}}
\DeclareMathOperator{\Sec}{Sec}
\def\tdLie{\mathbb{L}}
\def\tdBracket#1{\left\llbracket #1 \right\rrbracket}
\let\ds\displaystyle
\DeclareMathOperator{\longi}{Length}
\DeclareMathOperator{\ener}{Energy}
\DeclareMathSymbol{\shortmin}{\mathbin}{AMSa}{"39}
\newcommand{\dotnabla}{{%
    \mathchoice
    {% Display style
        \tikz[baseline=(nabla.base)]{%
            \node[inner sep=0pt, outer sep=0pt](nabla){$\nabla$};
            \node[inner sep=0pt, outer sep=0pt] at (nabla.center) [yshift=0.7pt,xshift=0.3pt] {\tiny$\circ$};
        }%
    }
    {% Text style
        \tikz[baseline=(nabla.base)]{%
            \node[inner sep=0pt, outer sep=0pt](nabla){$\nabla$};
            \node[inner sep=0pt, outer sep=0pt] at (nabla.center) [yshift=0.7pt] {\tiny$\circ$};
        }%
    }
    {% Script style
        \tikz[baseline=(nabla.base)]{%
            \node[inner sep=0pt, outer sep=0pt](nabla){\scriptsize$\nabla$};
            \node[inner sep=0pt, outer sep=0pt] at (nabla.center) [yshift=0.4pt] {\tiny$\circ$};
        }%
    }
    {% Scriptscript style
        \tikz[baseline=(nabla.base)]{%
            \node[inner sep=0pt, outer sep=0pt](nabla){\scriptsize$\nabla$};
            \node[inner sep=0pt, outer sep=0pt] at (nabla.center) [yshift=0.3pt] {\tiny$\circ$};
        }%
    }
}}
\renewcommand{\dotnabla}{{%
    \mathchoice
    {% Display style
        \tikz[baseline=(nabla.base)]{%
            \node[inner sep=0pt, outer sep=0pt](nabla){$\nabla$};
            \node[inner sep=0pt, outer sep=0pt] at (nabla.center) [yshift=0.7pt,xshift=0.3pt] {$\mathbat$};
        }%
    }
    {% Text style
        \tikz[baseline=(nabla.base)]{%
            \node[inner sep=0pt, outer sep=0pt](nabla){$\nabla$};
            \node[inner sep=0pt, outer sep=0pt] at (nabla.center) [yshift=0.7pt] {$\mathbat$};
        }%
    }
    {% Script style
        \tikz[baseline=(nabla.base)]{%
            \node[inner sep=0pt, outer sep=0pt](nabla){\scriptsize$\nabla$};
            \node[inner sep=0pt, outer sep=0pt] at (nabla.center) [yshift=0.4pt] {$\mathbat$};
        }%
    }
    {% Scriptscript style
        \tikz[baseline=(nabla.base)]{%
            \node[inner sep=0pt, outer sep=0pt](nabla){\scriptsize$\nabla$};
            \node[inner sep=0pt, outer sep=0pt] at (nabla.center) [yshift=0.3pt] {$\mathbat$};
        }%
    }
}}
\renewcommand{\dotnabla}{\mathcal{D}}
\DeclareMathOperator{\Flow}{F}
\begin{document}
%%%%%%%%%%%%%%%%%%%%%%%%%%%%%%%%%%%%%%%%%%%%%%%%%%%%%%%%%%%%%%%%
%%%%%%%%%%%%%%%%%%%%%%%%%%%%%%%%%%%%%%%%%%%%%%%%%%%%%%%%%%%%%%%%
%%%%%%%%%%%%%%%%%%%%%%%%%%%%%%%%%%%%%%%%%%%%%%%%%%%%%%%%%%%%%%%%
%%%%%%%%%%%%%%%%%%%%%%%%%%%%%%%%%%%%%%%%%%%%%%%%%%%%%%%%%%%%%%%%

\thispagestyle{empty}

{\LARGE\sffamily\raggedright
Time-dependent metrics and connections
}

\vspace{2em}

{\Large\raggedright\sffamily
    Xavier Gràcia
}\vspace{1mm}\newline
{\raggedright
    Department of Mathematics, Universitat Politècnica de Catalunya\\
    Barcelona, Catalonia, Spain\\
    e-mail: 
    \href{mailto:xavier.gracia@upc.edu}{xavier.gracia@upc.edu} --- orcid: 
    \href{https://orcid.org/0000-0003-1006-4086}{0000-0003-1006-4086}
}

\bigskip

{\Large\raggedright\sffamily
    Xavier Rivas
}\vspace{1mm}\newline
{\raggedright
    Department of Computer Engineering and Mathematics, Universitat Rovira i Virgili\\
    Avinguda Països Catalans 26, 43007 Tarragona, Spain\\
    e-mail: \href{mailto:xavier.rivas@urv.cat}{xavier.rivas@urv.cat} --- orcid: \href{https://orcid.org/0000-0002-4175-5157}{0000-0002-4175-5157}
}

\bigskip

{\Large\raggedright\sffamily
    Daniel Torres
}\vspace{1mm}\newline
{\raggedright
    Department of Mathematics, Universitat Politècnica de Catalunya\\
    Barcelona, Catalonia, Spain\\
    e-mail: \href{mailto:daniel.torres.moral@upc.edu}{daniel.torres.moral@upc.edu} --- orcid: \href{https://orcid.org/0000-0001-8355-3783}{0000-0001-8355-3783}
}

\vspace{2em}

{\large\raggedright\sffamily
20 January 2026
}

\vspace{2em}
%%%%%%%%%%%%%%%%%%%%%%%%%%%%%%%%%%%%%%%%%%%%%%%%%%%%%%%%%%%%%%%%
{\large\bf\raggedright
Abstract
}\vspace{1mm}\newline
{\noindent
Time-dependent structures often appear in differential geometry, 
particularly in the study of non-autonomous differential equations on manifolds. 
One may study the geodesics associated with a time-dependent Riemannian metric
by extremizing the corresponding energy functional,
but also through the introduction of a more general concept of 
time-dependent covariant derivative operator.
This relies on the examination of connections on the product manifold $\mathbb{R} \times M$.
For these time-dependent covariant derivatives we explore the notions of
parallel transport, geodesics and torsion.
We also define the derivative of a one-parameter family of connections.
}
\bigskip

%%%%%%%%%%%%%%%%%%%%%%%%%%%%%%%%%%%%%%%%%%%%%%%%%%%%%%%%%%%%%%%%
{\large\bf\raggedright
Keywords:}
time-dependent system, Riemannian metric, connection, covariant derivative, parallel transport, geodesic
\medskip

%%%%%%%%%%%%%%%%%%%%%%%%%%%%%%%%%%%%%%%%%%%%%%%%%%%%%%%%%%%%%%%%
{\large\bf\raggedright
MSC2020 codes:}
53B15
53B20
53C05
58E10

\vspace{2em}
%%%%%%%%%%%%%%%%%%%%%%%%%%%%%%%%%%%%%%%%%%%%%%%%%%%%%%%%%%%%%%%%
{\setcounter{tocdepth}{1}
\def\baselinestretch{1}
\small
\def\addvspace#1{\vskip 1pt}
\parskip 0pt plus 0.1mm
\tableofcontents
}

%%%%%%%%%%%%%%%%%%%%%%%%%%%%%%%%%%%%%%%%%%%%%%%%%%%%%%%%%%%%%%%%
\pagestyle{fancy}

\fancyhead[C]{}
\fancyhead[RO]{Time-dependent metrics and connections}
\fancyhead[LO]{\thepage}
\fancyhead[RE]{\thepage}
\fancyhead[LE]{X. Gràcia, X. Rivas and D. Torres}

\fancyfoot[L]{}
\fancyfoot[C]{}
\fancyfoot[R]{}

\renewcommand{\headrulewidth}{0.1pt}
\renewcommand{\footrulewidth}{0pt}

\renewcommand{\headrule}{%
    \vspace{3pt}
    \hrule width\headwidth height 0.4pt
    \vspace{0pt}
}

\setlength{\headsep}{30pt}

\def\baselinestretch{1.1}
\normalsize
%%%%%%%%%%%%%%%%%%%%%%%%%%%%%%%%%%%%%%%%%%%%%%%%%%%%%%%%%%%%%%%%
%%%%%%%%%%%%%%%%%%%%%%%%%%%%%%%%%%%%%%%%%%%%%%%%%%%%%%%%%%%%%%%%
\section{Introduction}
%%%%%%%%%%%%%%%%%%%%%%%%%%%%%%%%%%%%%%%%%%%%%%%%%%%%%%%%%%%%%%%%
%%%%%%%%%%%%%%%%%%%%%%%%%%%%%%%%%%%%%%%%%%%%%%%%%%%%%%%%%%%%%%%%

Time-dependent structures often appear in differential geometry.
The simplest example of this is motivated by the study of non-autonomous differential equations on manifolds. 
A time-dependent vector field on a manifold~$M$ 
is a smooth one-parameter family $(X_t)$ of vector fields on it,
with $t \in \R$.
This family is more conveniently wrapped as a (smooth) map%
\footnote{We assume such a domain to simplify notation.
The domain of~$X$ could be more generally 
any non-empty open subset of $\R \times M$.}
$X \colon \R \times M \to \T M$
such that $X_t = X(t,\cdot)$ for each $t\in\R$.
In other words
$X(t,x) \in \T_xM$ for each $(t,x)$.
Thus, $X$ is a \emph{vector field along} 
the projection map 
$\rho_2 \colon \R \times M \to M$.
Such $X$ defines a non-autonomous differential equation on~$M$ given by
$\gamma'(t) = X(t,\gamma(t))$.
The study of this differential equation
can be performed by means of the  
\emph{suspension} 
\cite{MR_07}
(or autonomization)
of~$X$,
which is the vector field $\widetilde X$
defined on $\R \times M$ as 
$\widetilde X = \parder{}{t} + X$.
In this expression we identify, as usual,
the tangent space of a product
with the direct sum of the tangent spaces
---see Appendix~\ref{app:products} for more details and notation.
With the suspension at hand, 
it is easy to deduce properties of the time-dependent flow
of~$X$,
since it can be expressed in terms of the 
(time-independent) flow of the vector field $\widetilde X$.
Some details are given in Appendix~\ref{app:time-dep-flow}.

The main purpose of this paper is to study the notions of time-dependent metric and time-dependent connection on a manifold.
Riemannian metrics depending on a parameter have been widely used,
as for instance in the study of the Ricci flow
(see for instance 
\cite{Top_2006}
for a short overview)
that lead to the proof of the Poincar\'e conjecture.
Nevertheless, in this paper, we regard this parameter from a dynamical perspective,
by considering it to be the time parameter
of the differential equations associated with a metric or,
more generally, a connection.
From this perspective, 
time-dependent metrics can arise in at least two ways.
\begin{itemize}
\item 
There is a ``fixed'' space where effectively there is a kind of metric depending on time. 
\\
For instance, we can study
the optical path of a light ray in a medium with time-dependent refractive index;
the travel time of a car along a traffic network with variable traffic density;
the motion of a point particle with variable mass,
etc.
\item 
There is a ``fixed manifold'' along with a one-parameter family of embeddings into a Riemannian manifold;
the pullback of the Riemannian metric by these embeddings is a time-varying Riemannian metric on the manifold.
\\
For instance, this could be the case of a constraint manifold moving in an ambient space.
\end{itemize}

Both cases are well-known in analytical mechanics.
For instance,
in Newtonian mechanics one can deal with time-dependent forces,
and in Lagrangian mechanics one can deal with time-dependent Lagrangian functions.

Nevertheless there are some aspects that deserve a further study,
either from the mechanical viewpoint
(see for instance the discussion on virtual displacements in
\cite{RaSh_2006},
just to mention one example)
or from the geometrical viewpoint.
It is well known that a pseudo-Riemannian metric 
induces a canonical connection on a manifold,
the Levi-Civita connection,
which allows to define the notions of 
parallel transport and geodesic
(see for instance
\cite{Lee_18},
among the many textbooks on Riemannian geometry).
Trying to understand these constructions in the time-dependent setting will be one of our goals.
As we will see, 
a time-dependent metric $g$ on~$M$
does not define just some time-dependent Christoffel symbols,
but it also gives rise to a (time-dependent) endomorphism of the tangent bundle;
this endomorphism is directly related to the time-dependent tensor field~$\dot g$.

Thus, if we seek a notion of time-dependent covariant derivative,
we need to study this in a broader perspective.
So
we will study connections on the product manifold $\R \times M$,
and will try to extract what is the relevant information 
by comparing this with what one obtains in the Riemannian case.
So we will define a \emph{time-dependent covariant derivation operator}
$\dotnabla$,
which is not merely given by a one-parameter family of connections 
$^t\nabla$ on~$M$,
but needs some additional structures 
(two time-dependent endomorphisms $A$ and $B$ of the tangent bundle $\T M$,
and a time-dependent vector field $C$ on~$M$).
For this new notion 
we will study parallel transport and geodesics, and 
investigate what corresponds to the torsion.
In these calculations a kind of time-dependent Lie bracket will arise;
we will also take a look on this notion.
As a noteworthy fact, we will also show that the time derivative of a 
time-dependent connection is a (time-dependent) tensor field on the manifold.

The paper is organized as follows.
In Section~\ref{sec:time-dep-lie-bracket} we give the definitions for the time-dependent versions of Lie derivative of functions and vector fields.
In Section~\ref{sec:time-dep-metrics} we introduce the notion of
time-dependent (Riemannian) metric and the associated notion of length.
In Section~\ref{sec:variational} we perform a variational analysis of the length.
In Section~\ref{sec:Levi-Civita} we study the Levi-Civita connection of the suspension of a metric.
This leads us to Sections \ref{sec:connection-RxM}, \ref{sec:parallel-transport} and~\ref{sec:torsion},
where we study the more general case of 
a connection on the product $\R \times M$,
the notions of parallel transport and geodesics, 
and the torsion operator. In Section \ref{sec:example} we use the double pendulum with time-dependent masses to illustrate the theory.
Finally, there are a couple of appendices mainly to fix notations and 
recall some particular results about product manifolds and time-dependent flows.

Throughout the paper all manifolds are assumed to be smooth and paracompact,
and all maps are assumed to be smooth.
Einstein's summation convention over crossed repeated indices is understood.

%%%%%%%%%%%%%%%%%%%%%%%%%%%%%%%%%%%%%%%%%%%%%%%%%%%%%%%%%%%%%%%%
%%%%%%%%%%%%%%%%%%%%%%%%%%%%%%%%%%%%%%%%%%%%%%%%%%%%%%%%%%%%%%%%
\section{Time-dependent Lie bracket}
\label{sec:time-dep-lie-bracket}

In this section we study the notions of time-dependent Lie derivative and time-dependent Lie bracket.
This will not be used until Section~\ref{sec:torsion},
but we prefer to introduce it right now, since no metric or connection are needed.

Let $X$ be a time-dependent vector field on~$M$.
We denote by $s \mapsto \Flow_X(s,t,p)$
the integral curve of~$X$ with initial condition $(t,p)$;
$\Flow_X$ is the (time-dependent) flow of~$X$.
As it is explained in Appendix~\ref{app:time-dep-flow},
we can compute the variation of a function
$f\colon \R \times M \rightarrow \R$
along the flow of~$X$.
This leads us to the definition of the 
\dfn{time-dependent Lie derivative} of~$f$ with respect to~$X$ as
$$
\tdLie_Xf(t,p)  
= 
\lim_{\varepsilon\to 0}
\frac{ 
f\left(t{+}\varepsilon\,,\,\Flow_X^{t+\varepsilon,t}(p)\right) 
- f(t,p) }
{\varepsilon} \,,
$$
where we made use of the (local) diffeomorphisms 
$\Flow_X^{s,t} = \Flow_X(s,t,\cdot)$
defined by the flow.
It is easily checked that 
\begin{equation}
\tdLie_X f
= \parder{f}{t} + \Lie_X f
\,.
\end{equation}

In a similar way 
the tangent map 
$
\T \Flow_X^{t,s}
$
of the flow can be used to define the 
\dfn{time-dependent Lie derivative} of a time-dependent vector field $Y$ with respect to~$X$ as
$$
\tdLie_XY(t,p)
= 
\lim_{\varepsilon\to0} 
\frac{
\T_{\Flow_X(t+\varepsilon, t,p)}
\Flow_X^{t,t+\varepsilon}
\cdot 
Y(t+\varepsilon, \Flow_X(t{+}\varepsilon, t,p)) 
- 
Y(t,p)}
{\varepsilon}
\,.
$$
Another computation shows that
\begin{equation}
\tdLie_X Y = \dot Y + \Lie_X Y\,,
\end{equation}
where $\dot Y$ denotes the time derivative of the time-dependent vector field~$Y$ 
(see Appendix~\ref{app:products}).
A similar construction allows to differentiate any time-dependent tensor field on~$M$.

\medskip

The Lie bracket of vector fields can be defined in several ways:
algebraic (as a commutator of derivations), 
dynamical (as a Lie derivative), 
geometric (as a ``commutator'' of flows;
see 
\cite[Thm.\,3.16]{KMS_93}
or
\cite[Chap.\,5, Thm.\,16]{Spivak1},
for instance)
and even as a coordinate expression.
However, trying these procedures with time-dependent vector fields 
may lead to different or poor results.
We will choose the geometric definition.

\begin{proposition}\label{prop:timedep-Lie-bracket}
Let $X$, $Y$ be two time-dependent vector fields on~$M$,
and $(t,p) \in\R\times M$.
Consider
\vadjust{\kern -5mm}
\begin{align*}
c_1(\varepsilon) &= 
\Flow_X(t+\varepsilon, t, p)\,,
\\
c_2(\varepsilon) &= 
\Flow_Y(t+2\varepsilon, t+\varepsilon, c_1(\varepsilon))\,,
\\
c_3(\varepsilon) &= 
\Flow_X(t+\varepsilon, t+2\varepsilon, c_2(\varepsilon))\,,
\\
c_4(\varepsilon) &= \Flow_Y(t, t+\varepsilon, c_3(\epsilon))
\,.
\end{align*}
Then,
$c_4(0) = p$,  
$c_4'(0) = 0$, and 
\begin{equation}
\frac12 \, c_4''(0) = 
[X,Y]_{(t,p)} + \dot Y(t,p) - \dot X(t,p)
\,.
\end{equation}
\end{proposition}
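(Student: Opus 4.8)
The plan is to recognise the four curves as the successive stages of the classical \emph{commutator of flows} construction, carried out not on $M$ but on the product $\R\times M$ with the suspensions $\widetilde X=\parder{}{t}+X$ and $\widetilde Y=\parder{}{t}+Y$ replacing $X$ and $Y$. The autonomous flow of a suspension is $\Flow_{\widetilde X}\bigl(s;(t,p)\bigr)=\bigl(t+s,\Flow_X(t+s,t,p)\bigr)$ (Appendix~\ref{app:time-dep-flow}), and likewise for $\widetilde Y$. First I would verify, stage by stage, that flowing the point $(t,p)$ along $\widetilde X$ for time $\varepsilon$, then along $\widetilde Y$ for time $\varepsilon$, then along $\widetilde X$ for time $-\varepsilon$, and finally along $\widetilde Y$ for time $-\varepsilon$, produces exactly $\bigl(t+\varepsilon,c_1(\varepsilon)\bigr)$, $\bigl(t+2\varepsilon,c_2(\varepsilon)\bigr)$, $\bigl(t+\varepsilon,c_3(\varepsilon)\bigr)$ and $\bigl(t,c_4(\varepsilon)\bigr)$. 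The decisive feature is that the time argument returns to $t$, so the construction yields a curve $d(\varepsilon)=\bigl(t,c_4(\varepsilon)\bigr)$ on $\R\times M$ whose $M$-component is precisely $c_4(\varepsilon)$.

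With this identification, I would invoke the classical geometric characterisation of the Lie bracket (\cite[Thm.\,3.16]{KMS_93} or \cite[Chap.\,5, Thm.\,16]{Spivak1}) for the \emph{autonomous} fields $\widetilde X,\widetilde Y$, taken in the same order as the four stages above: the commutator curve satisfies $d(0)=(t,p)$, $d'(0)=0$ and $\tfrac12\,d''(0)=[\widetilde X,\widetilde Y]_{(t,p)}$. Reading off the two components of these identities gives at once $c_4(0)=p$ and $c_4'(0)=0$, and reduces what remains to computing the $M$-component of $[\widetilde X,\widetilde Y]$.

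That last step is a one-line coordinate computation. With $\widetilde X=\partial_t+X^i\partial_i$ and $\widetilde Y=\partial_t+Y^i\partial_i$, the $\partial_t$-coefficients are constant and so drop out of the bracket, leaving
\begin{equation*}
[\widetilde X,\widetilde Y]
=\bigl(X^j\partial_jY^i-Y^j\partial_jX^i+\partial_tY^i-\partial_tX^i\bigr)\partial_i
=\bigl([X,Y]^i+\dot Y^i-\dot X^i\bigr)\partial_i .
\end{equation*}
Its $\partial_t$-component vanishes (consistent with the first component of $d$ being constant) and its $M$-component is $[X,Y]+\dot Y-\dot X$. Combined with $\tfrac12\,d''(0)=[\widetilde X,\widetilde Y]_{(t,p)}$, this gives $\tfrac12\,c_4''(0)=[X,Y]_{(t,p)}+\dot Y(t,p)-\dot X(t,p)$.

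The step I expect to be most delicate is the first one: pinning down that the four stages of the suspension commutator reproduce $c_1,\dots,c_4$ with exactly the right time slots and the two correct sign reversals, and checking that the orientation convention in the cited theorem matches the order chosen in the proposition (a swapped order would flip the sign of the bracket). Once this correspondence and sign are fixed, the conclusion follows from a cited theorem together with the elementary bracket computation above. A direct alternative --- Taylor-expanding each $\Flow$ to second order in $\varepsilon$ and composing the four maps --- also works, but it forces one to track $\partial_s^2\phi$, $\partial_s\partial_t\phi$ and $\partial_t^2\phi$ of the flow and to watch the second-order spatial terms cancel, which the suspension route bypasses entirely.
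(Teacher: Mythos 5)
Your proof is correct, but it follows a genuinely different route from the paper's. The paper proves the proposition by a direct computation with the properties of the time-dependent flow (or, alternatively, by second-order Taylor expansions in coordinates), and only afterwards records, as a separate statement, that $\tdBracket{X,Y}=[\widetilde X,\widetilde Y]$. You invert that logic: you first identify the four stages $c_1,\dots,c_4$ as the $M$-components of the classical commutator-of-flows construction applied to the suspensions $\widetilde X,\widetilde Y$ on $\R\times M$, using $\Flow_{\widetilde X}\bigl(s;(t_0,p)\bigr)=\bigl(t_0+s,\Flow_X(t_0+s,t_0,p)\bigr)$ so that the time slots, the two sign reversals, and the return of the time coordinate to $t$ all come out right; you then invoke the autonomous theorem \cite[Thm.\,3.16]{KMS_93} to get $\tfrac12\,d''(0)=[\widetilde X,\widetilde Y]_{(t,p)}$, and finish with the one-line bracket computation $[\widetilde X,\widetilde Y]=[X,Y]+\dot Y-\dot X$. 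The order of your four stages (flow along $X$, then $Y$, then $X$ backwards, then $Y$ backwards) matches the convention that yields $+[X,Y]$, so the sign is correct, and the stage-by-stage correspondence you flag as delicate does check out exactly against the definitions of $c_1,\dots,c_4$. What your route buys is that the ``delicate but not especially difficult calculation'' the paper alludes to is outsourced entirely to a standard cited theorem, at essentially no cost beyond the bookkeeping. One small point worth making explicit: since $d(\varepsilon)=(t,c_4(\varepsilon))$ has constant first component and $d'(0)=0$, under the splitting $\T_{(t,p)}(\R\times M)\cong\T_t\R\oplus\T_pM$ the vector $\tfrac12\,d''(0)$ is $\bigl(0,\tfrac12\,c_4''(0)\bigr)$, which is consistent with the vanishing $\parder{}{t}$-component of $[\widetilde X,\widetilde Y]$ that you compute.
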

\begin{proof}
It uses the properties of the time-dependent flow 
(Appendix~\ref{app:time-dep-flow}).
It is a delicate but not especially difficult calculation.
Alternatively one can compute the Taylor expansions in coordinates, up to order~2.
\end{proof}

A comment on this expression is in order. 
The tangent vector $c'(0)$ of a path in~$M$
is, of course, a vector tangent to~$M$, 
but this is not the case of $c''(0)$, which is tangent to $\T M$.
Nevertheless, when $c'(0) = 0$ then 
$c''(0)$ can be thought of as a tangent vector on~$M$.
This can be explained from the identification of a vector space
with its tangent space, 
from its interpretation as a point derivation on functions,
or more simply from its transformation under coordinate changes.
See also the references mentioned above.

\medskip
In view of the preceding result we propose the following geometric definition:

\begin{definition}
The 
\dfn{time-dependent Lie bracket} of two time-dependent vector fields $X,Y$ is
the time-dependent vector field
\begin{equation}
\label{eq:tdbracket}
\tdBracket{X,Y} = 
[X,Y] + \dot Y - \dot X
\,.
\end{equation}
\end{definition}

This definition is related to 
the suspensions $\widetilde{X}$ and $\widetilde{Y}$ 
and to the usual Lie bracket in $\R\times M$ in a natural way.
A trivial computation shows:

\begin{proposition}
Let $X, Y$ be two time-dependent vector fields on~$M$,
and $\widetilde X, \widetilde Y$ their suspensions to $\R \times M$.
Then
\begin{equation}
\tdBracket{X, Y} = [\widetilde X, \widetilde Y] 
\,.
\end{equation}
\qed
\end{proposition}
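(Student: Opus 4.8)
The plan is to exploit the bilinearity of the ordinary Lie bracket on $\R\times M$ together with the decomposition $\widetilde X = \parder{}{t} + X$ and $\widetilde Y = \parder{}{t} + Y$, where each time-dependent vector field is regarded as a vector field on $\R\times M$ whose $\parder{}{t}$-component vanishes (cf.\ Appendix~\ref{app:products}). Expanding the bracket gives
\begin{equation*}
[\widetilde X, \widetilde Y]
= \left[\parder{}{t}, \parder{}{t}\right]
+ \left[\parder{}{t}, Y\right]
+ \left[X, \parder{}{t}\right]
+ [X, Y]\,,
\end{equation*}
so the task reduces to identifying each of the four summands.

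The first term vanishes. For the middle two, I would work in coordinates $(t,x^i)$: a time-dependent vector field reads $Z = Z^i(t,x)\,\parder{}{x^i}$, and since $\parder{}{t}$ has constant components one finds $[\parder{}{t}, Z] = (\partial Z^i/\partial t)\,\parder{}{x^i}$, which is exactly the time derivative $\dot Z$. Hence $[\parder{}{t}, Y] = \dot Y$ and $[X, \parder{}{t}] = -\dot X$. For the last term, because both $X$ and $Y$ have vanishing time component, the $\parder{}{t}$-derivatives drop out of the coordinate formula for the bracket, so that $[X,Y]$ computed on $\R\times M$ coincides with the fiberwise (equal-time) Lie bracket of the time-dependent vector fields appearing in \eqref{eq:tdbracket}, and in particular it too has zero time component. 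Collecting the three surviving terms yields $[\widetilde X, \widetilde Y] = [X,Y] + \dot Y - \dot X = \tdBracket{X,Y}$.

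Since every surviving term is tangent to~$M$ (all time components cancel), the identity holds as an equality of time-dependent vector fields on~$M$ with no further identification required. The only point demanding a little care — and hence the sole possible obstacle — is the bookkeeping of the two brackets involved: the genuine Lie bracket on the product manifold versus the equal-time bracket on~$M$ used to define $\tdBracket{\,\cdot\,,\cdot\,}$. Once one checks that the vanishing time components make the extra $\parder{}{t}$-terms disappear, the computation is indeed trivial, in agreement with the remark preceding the statement.
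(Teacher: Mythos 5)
Your computation is correct and is precisely the ``trivial computation'' the paper alludes to before omitting the proof: bilinear expansion of $[\widetilde X,\widetilde Y]$, the identity $[\partial/\partial t,Z]=\dot Z$ from Appendix~A, and the observation that the product-manifold bracket of two vertical fields reduces to the equal-time bracket on~$M$. Nothing is missing; the bookkeeping point you flag (product bracket versus equal-time bracket) is exactly the one step worth making explicit.
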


Another technical comment is in order.
As it is explained in Appendix~\ref{app:products},
we have a decomposition
$\vf(\R \times M) = \vf(\rho_1) \oplus \vf(\rho_2)$,
that is,
a vector field $\widehat X$ on the product is the sum of 
a vector field along 
$\rho_1 \colon \R \times M \to \R$
(which we could call its ``horizontal part''
with respect to the trivial fibration~$\rho_1$),
and 
a vector field along 
$\rho_2 \colon \R \times M \to M$ 
(a time-dependent vector field, the ``vertical part'').
The formula in the proposition says that the Lie bracket of two suspensions
is purely ``vertical''.

Finally, notice that
\begin{equation}
\tdBracket{X, Y} =  \tdLie_{X}Y - \dot X
,
\end{equation}
therefore the time-dependent Lie bracket and Lie derivative are different.
As well as the ordinary Lie bracket, the anticommutative property is still conserved,
in contrast with the time-dependent Lie derivative.

%%%%%%%%%%%%%%%%%%%%%%%%%%%%%%%%%%%%%%%%%%%%%%%%%%%%%%%%%%%%%%%%
%%%%%%%%%%%%%%%%%%%%%%%%%%%%%%%%%%%%%%%%%%%%%%%%%%%%%%%%%%%%%%%%
\section{Time-dependent Riemannian metrics}
\label{sec:time-dep-metrics}
%%%%%%%%%%%%%%%%%%%%%%%%%%%%%%%%%%%%%%%%%%%%%%%%%%%%%%%%%%%%%%%%
%%%%%%%%%%%%%%%%%%%%%%%%%%%%%%%%%%%%%%%%%%%%%%%%%%%%%%%%%%%%%%%%

\begin{definition}
A \dfn{time-dependent Riemannian metric} 
on a manifold~$M$
is a smooth family of Riemannian metrics $(g_t)$ on~$M$,
namely a smooth map 
$g \colon \R \times M \to \T^* M\otimes\T^* M$ 
such that 
$g_t = g(t,\cdot)$ is a Riemannian metric for every $t\in \R$. 
In other words, 
the map $g$ is a positive-definite symmetric 
2-covariant tensor field 
along the projection 
$\rho_2 \colon \R \times M \to M$.
\end{definition}

Associated with the metrics $g_t$
there are a number of canonical constructions,
as for instance the curvature tensors $R_t$;
these constitute the time-dependent curvature tensor,
which we can understand as a tensor field along the projection~$\rho_2$. 

We restrict ourselves to the Riemannian case,
but of course many of the following considerations 
apply to the pseudo-Riemannian case,
sometimes with certain restrictions.

\smallskip 

It seems natural to define the 
\dfn{length}
of a smooth path 
$\gamma \colon I \to M$
as the integral 
\begin{equation}
\longi[\gamma] 
=
\int_I \sqrt{g_t(\gamma'(t),\gamma'(t))} \,\d t
\,.
\end{equation}

%%%%%%%%%%%%%%%%%%%%%%%%%%%%%%%%%%%%%%%%%%%%%%%%%%%%%%%%%%%%%%%%
\subsection*{Metric induced by a one-parameter family of embeddings}

A special case of interest of time-dependent metrics 
is provided by the following setting.
Consider a Riemannian manifold 
$(\widetilde M,\widetilde g)$
and a smooth one-parameter family of embeddings 
$j_t = j(t,\cdot) \colon M \to \widetilde M$.
Then we obtain a time-dependent metric on~$M$ 
by defining 
\begin{equation*}
g_t = j_t^*(\widetilde g)\,.
\end{equation*}
Given a path $\gamma \colon I \to M$,
composing it with these embeddings we obtain a path 
$\widetilde\gamma \colon I \to \widetilde M$
given by
$\widetilde\gamma(t) = j_t(\gamma(t))$.
So $\gamma$ has an 
\dfn{embedded length} 
\begin{equation*}
\longi_\mathrm{emb}[\gamma] 
=
\longi_{\widetilde g}[\widetilde\gamma] 
=
\int_I \sqrt{\widetilde g(\widetilde\gamma'(t),\widetilde\gamma'(t))} \,\d t
\,.
\end{equation*}
This ``length'' is quite different from the previously length,
since for instance a constant path $\gamma$ in~$M$ 
may not be constant when embedded in~$\widetilde M$,
and conversely.
Let us show all of this in a simple example.

\begin{example}
Consider the unit circle $M = \S^1$ endowed with the usual angular coordinate~$\theta$, and $\widetilde M = \R^2$ with its usual Riemannian metric which reads, in Cartesian coordinates,
$\widetilde g = \d x\otimes\d x + \d y\otimes\d y$.

\begin{itemize}
\item 
Take the family of embeddings $(j_t)$ given by
$j_t \colon \S^1 \to \R^2$ 
given by 
$j_t(p) = tp$.
In the aforementioned coordinates they read
$\theta \mapsto (t \cos\theta, t\sin\theta)$.
Then $g_t = t^2 \d\theta\otimes\d\theta$.

Consider the path 
$\gamma(t) = (\cos 2\pi t,\sin 2\pi t)$,
for $t \in I = [0,1]$.
One easily computes 
$\norm{\gamma'(t)}_{g_t} = 2\pi t$
and therefore
$
\ds 
\longi[\gamma] =
\int_0^1 \!\!\! 2\pi t \,\d t = \pi
$,
whereas
$\widetilde\gamma(t) = (t \cos 2\pi t, t\sin 2\pi t)$
yields
$
\ds
\longi_\mathrm{emb}[\gamma] =
\int_0^1 \!\!(1+4\pi^2t^2)^{1/2} \,\d t
\approx
3.383044
$,
slightly bigger. 

Instead, if we take the constant path 
$\delta(t)=(1,0)$
for $t \in [0,1]$,
obviously 
$\longi[\delta] = 0$,
whereas 
$\longi_\mathrm{emb}[\delta] = 1$.

\item
Now consider the embeddings 
$i_t \colon \S^1 \to \R^2$ 
given by the clockwise rotation of angle~$t$.
The induced metrics are $g_t = \d\theta\otimes\d\theta$.
If we take the same path $\gamma$ as before,
$\longi[\gamma] = 2\pi$,
whereas 
$\longi_\mathrm{e}[\gamma] = 0$,
since the embedded path is now constant.
\end{itemize}
\end{example}

%%%%%%%%%%%%%%%%%%%%%%%%%%%%%%%%%%%%%%%%%%%%%%%%%%%%%%%%%%%%%%%%
\subsection*{The suspension of a metric}

Let $(g_t)$ be a time-dependent metric on~$M$.
On the product manifold
$\R \times M$,
we consider the \dfn{suspension}
$$
\widehat g|_{(t,x)} = 
\d t \otimes \d t |_{(t,x)} + \rho_2^*(g_t)|_{(t,x)}
\,,
$$
where $\d t \otimes \d t$ 
is the pullback of the canonical Riemannian metric of~$\R$ 
with respect to the projection
$\rho_1 \colon \R \times M \to \R$.

Notice that $g_t = j_t^*(\widehat g)$, where 
$j_t(x) = (t,x)$.
Thus, any time-dependent metric 
can be obtained as the pullback of a metric $\widehat g$
through a one-parameter family of embeddings,
as in the preceding examples.
However, not always this $\widehat g$ will have geometric interest.

%%%%%%%%%%%%%%%%%%%%%%%%%%%%%%%%%%%%%%%%%%%%%%%%%%%%%%%%%%%%%%%%
%%%%%%%%%%%%%%%%%%%%%%%%%%%%%%%%%%%%%%%%%%%%%%%%%%%%%%%%%%%%%%%%
\section{Variational analysis}
\label{sec:variational}
%%%%%%%%%%%%%%%%%%%%%%%%%%%%%%%%%%%%%%%%%%%%%%%%%%%%%%%%%%%%%%%%
%%%%%%%%%%%%%%%%%%%%%%%%%%%%%%%%%%%%%%%%%%%%%%%%%%%%%%%%%%%%%%%%

In Riemannian geometry, geodesics arise as critical paths of the
energy functional,
and their reparametrizations
are the critical paths of the length functional.
So, let us study these same functionals for the time-dependent case:
\begin{align}
    \ener[\gamma]
    & = \int_a^b T(t,\gamma'(t)) \, \d t
    = \int_a^b \frac12 g_t(\gamma'(t), \gamma'(t)) \, \d t\,,
    \\
    \longi[\gamma]
    & = \int_a^b L(t,\gamma'(t)) \, \d t
    = \int_a^b \sqrt{2 T\,} \, \d t
    = \int_a^b \sqrt{g_t(\gamma'(t), \gamma'(t))} \, \d t
    \,.
\end{align}

The critical paths for these functionals 
are the solutions of their corresponding Euler--Lagrange equations. 
Let us compute them first for the energy functional.
We consider coordinates $(x^i)$ on~$M$,
we denote by $(x^i, u^i)$ 
the corresponding natural coordinates of the tangent bundle,
and write
$g = g_{ij}(t, x)\,\d{x^i}\otimes\d{x^j}$.
Thus, the Euler--Lagrange equations in coordinates read
$$
\frac\d{\d t}\left(\parder{T}{u^\ell}\right) - \parder{T}{x^\ell}
= 
\parder{g_{\ell i}}{t} \dot\gamma^i +
g_{\ell i} \ddot\gamma^i + 
\frac12 
\left(
  \parder{g_{i\ell}}{x^j}
  + \parder{g_{j\ell}}{x^i}
  - \parder{g_{ij}}{x^\ell}
\right) \dot\gamma^i \dot\gamma^j  
= 
0
\,.
$$
Since the metric is non-degenerate
we can multiply this expression by the 
inverse matrix $g^{k\ell}$ of the metric,
thus obtaining
\begin{equation}
\label{eq:geodesic-coordinates}
\ddot\gamma^k + 
\Gamma_{ij}^k \dot\gamma^i \dot\gamma^j + 
g^{k\ell} \parder{g_{\ell i}}{t} \dot\gamma^i
= 
0\,,
\end{equation}
where $\Gamma_{ij}^k$ are the (time-dependent) Christoffel symbols 
of the Levi-Civita connection $^t\nabla$ 
of each metric $g_t$.

The first two terms seem to correspond to the equation of geodesics,
that is,
the covariant derivative 
$\nabla_{\!t} \gamma'(t)$
of the vector field $\gamma'$ along~$\gamma$,
but where the coefficients are now time-dependent since they are
the Christoffel symbols of 
a one-parameter family of connections~$\nabla$.

There is an additional time-dependent term.
To identify it,
we make use of the musical isomorphisms 
$$ 
G_t \colon \T M \to \T^*M
\,,\quad 
v \mapsto g_t(v,\cdot)
$$
associated with~$g_t$,
and their inverses 
$G_t^{-1} \colon \T^*M \to \T M$.
Also we will use the dot notation for the time-derivative,
and will mostly omit the $t$ parameter in the equations.
Notice, however, that 
$\dot G_t \colon \T M \to \T^*M$ 
may no longer be an isomorphism.
With these conventions, we have:

\begin{proposition}
Let $\gamma\colon I \to M$ be a path.
Then $\gamma$ is a critical path of the energy functional if and only if
\begin{equation}
\label{eqn:geodesic_of_g}
\nabla_t \gamma' + 
G^{-1} \cdot \dot G \cdot \gamma'
= 
0\,.
\end{equation}
\end{proposition}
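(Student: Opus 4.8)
The plan is to show that the coordinate equation~\eqref{eq:geodesic-coordinates} is exactly the intrinsic equation~\eqref{eqn:geodesic_of_g} written out in a chart. First I would recall that the Euler--Lagrange computation already carried out above yields, after contraction with $g^{k\ell}$, the three terms
$$
\ddot\gamma^k + \Gamma_{ij}^k \dot\gamma^i \dot\gamma^j + g^{k\ell}\,\parder{g_{\ell i}}{t}\,\dot\gamma^i = 0\,.
$$
So it suffices to identify each term of the left-hand side of~\eqref{eqn:geodesic_of_g} with the corresponding expression here. The first two terms are standard: the covariant derivative of $\gamma'$ along $\gamma$ with respect to the one-parameter family of Levi--Civita connections $^t\nabla$ reads, in coordinates, $(\nabla_t\gamma')^k = \ddot\gamma^k + \Gamma_{ij}^k\dot\gamma^i\dot\gamma^j$, since $\nabla_t$ acts at each fixed time with the Christoffel symbols $\Gamma_{ij}^k(t,\gamma(t))$ of $g_t$.

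The heart of the matter is the third term. The plan is to compute the coordinate expression of $G^{-1}\cdot\dot G\cdot\gamma'$ and check it equals $g^{k\ell}\,\parder{g_{\ell i}}{t}\,\dot\gamma^i$. I would start from the definition $G_t(v) = g_t(v,\cdot)$, so that in coordinates $G$ is the matrix $(g_{\ell i})$ sending $v^i\mapsto g_{\ell i}v^i$. Differentiating with respect to $t$, the endomorphism-valued map $\dot G$ acts by $v\mapsto \parder{g_{\ell i}}{t}\,v^i\,\d x^\ell$, i.e.\ it is represented by the matrix $\bigl(\parder{g_{\ell i}}{t}\bigr)$. Composing on the left with $G^{-1} = (g^{k\ell})$ then gives the endomorphism of $\T M$ with matrix $g^{k\ell}\,\parder{g_{\ell i}}{t}$, and applying it to $\gamma' = \dot\gamma^i\,\partial_i$ produces precisely $g^{k\ell}\,\parder{g_{\ell i}}{t}\,\dot\gamma^i$ as the $k$-th component. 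This matches the extra term exactly, so the two equations coincide.

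The one point that deserves care—and which I expect to be the main (though mild) obstacle—is the bookkeeping of what ``$\dot G$'' means as an object. Since $g_t$ varies, $G_t\colon \T M\to\T^*M$ is a time-dependent bundle map, and its derivative $\dot G$ is again a bundle map $\T M\to\T^*M$ (no longer invertible, as already noted), so that $G^{-1}\cdot\dot G$ is a genuine endomorphism of $\T M$ and the composite acts pointwise on $\gamma'(t)$. I would make explicit that both $G^{-1}$ and $\dot G$ are evaluated at $(t,\gamma(t))$ along the path, and that the operator $\nabla_t$ and the algebraic term are all taken at the same time slice, so that~\eqref{eqn:geodesic_of_g} is an equality of vector fields along $\gamma$. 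With these conventions fixed, the proof is just the chart-by-chart identification above together with the observation that~\eqref{eq:geodesic-coordinates} is equivalent, via multiplication by the invertible matrix $(g_{k\ell})$, to the vanishing of the Euler--Lagrange expression; hence $\gamma$ is critical for $\ener$ if and only if~\eqref{eqn:geodesic_of_g} holds.
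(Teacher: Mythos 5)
Your proposal is correct and follows essentially the same route as the paper: the paper derives the Euler--Lagrange equations of the energy functional in coordinates, contracts with $g^{k\ell}$ to obtain eq.~\eqref{eq:geodesic-coordinates}, and then identifies the extra term $g^{k\ell}\,\tparder{g_{\ell i}}{t}\,\dot\gamma^i$ as the coordinate expression of $G^{-1}\cdot\dot G\cdot\gamma'$ via the musical isomorphisms, exactly as you do. Your added care about $\dot G$ being a (possibly non-invertible) bundle map $\T M\to\T^*M$ evaluated along $(t,\gamma(t))$ is a useful clarification but not a departure from the paper's argument.
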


\begin{definition}
A \dfn{geodesic of the time-dependent metric}~$g$  
is a path $\gamma$ satisfying 
eq.~\eqref{eqn:geodesic_of_g}.
\end{definition}

Analogously, we have the following result for the length functional:
\begin{proposition}
Let $\gamma\colon I \to M$ be a path.
Then $\gamma$ is a critical path of the length functional if and only if
\begin{equation}
\nabla_t \gamma' + 
G^{-1} \cdot \dot G \cdot \gamma'
- \frac1{2 T} \frac{\d T}{\d t} \, \gamma' 
= 0
\,,
\end{equation}
where the kinetic energy $T$ is evaluated along~$\gamma$.   
\end{proposition}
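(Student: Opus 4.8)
The plan is to reduce this to the energy case already settled in the previous proposition, exploiting the pointwise relation $L=\sqrt{2T}$ between the two Lagrangians. Writing the Euler--Lagrange operator as $\EL_\ell = \totder{}{t}\parder{}{u^\ell} - \parder{}{x^\ell}$, I would first record that, since $L=(2T)^{1/2}$,
\[
\parder{L}{u^\ell} = \frac{1}{\sqrt{2T}}\,\parder{T}{u^\ell},
\qquad
\parder{L}{x^\ell} = \frac{1}{\sqrt{2T}}\,\parder{T}{x^\ell},
\]
where the explicit $t$-dependence of $g_{ij}(t,x)$ plays no role, as $\EL_\ell$ differentiates only with respect to $x^\ell$ and $u^\ell$, absorbing the explicit time dependence into the total derivative.

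Next I would apply the total derivative to $\parder{L}{u^\ell}$ by the product rule. Using $\totder{}{t}(2T)^{-1/2} = -\frac{1}{2T}\,\frac{1}{\sqrt{2T}}\,\totder{T}{t}$, with $\totder{T}{t}$ the total derivative of $T$ along~$\gamma$, this gives the clean factorization
\[
\EL_\ell(L) = \frac{1}{\sqrt{2T}}\left[\,\EL_\ell(T) - \frac{1}{2T}\,\totder{T}{t}\,\parder{T}{u^\ell}\,\right].
\]
Hence, away from the points where $T=0$, the equation $\EL_\ell(L)=0$ is equivalent to $\EL_\ell(T) = \frac{1}{2T}\,\totder{T}{t}\,\parder{T}{u^\ell}$.

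Finally I would pass to the invariant form. Along $\gamma$ one has $\parder{T}{u^\ell} = g_{\ell i}\dot\gamma^i = (G\gamma')_\ell$, so raising the index with the inverse metric gives $g^{k\ell}\parder{T}{u^\ell} = \dot\gamma^k$. Multiplying the displayed equation by $g^{k\ell}$ and invoking the identity $g^{k\ell}\EL_\ell(T) = \left(\nabla_t\gamma' + G^{-1}\dot G\,\gamma'\right)^k$ that is exactly the content of \eqref{eq:geodesic-coordinates} read as an identity (rather than set to zero), the right-hand side becomes $\frac{1}{2T}\,\totder{T}{t}\,\dot\gamma^k$, and the stated equation follows. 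The computation is routine; the only genuine subtlety is the regularity hypothesis $T\neq0$ (that is, $\gamma$ is an immersion), which is precisely where $L$ fails to be differentiable. As a consistency check, the extra term $\frac{1}{2T}\,\totder{T}{t}\,\gamma'$ vanishes exactly when $T$ is constant along~$\gamma$, so for a constant-speed parametrization one recovers the geodesic equation \eqref{eqn:geodesic_of_g}, mirroring the time-independent theory.
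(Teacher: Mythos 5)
Your proposal is correct and follows exactly the route the paper intends when it says the proof is ``similar to those of the time-independent case'': the factorization $\EL_\ell(L)=\tfrac{1}{\sqrt{2T}}\bigl[\EL_\ell(T)-\tfrac{1}{2T}\totder{T}{t}\parder{T}{u^\ell}\bigr]$ together with $g^{k\ell}\parder{T}{u^\ell}=\dot\gamma^k$ and the identity $g^{k\ell}\EL_\ell(T)=\bigl(\nabla_t\gamma'+G^{-1}\dot G\,\gamma'\bigr)^k$ already established for the energy functional. Your remark that the explicit $t$-dependence of $g_{ij}$ is harmless (it never enters the fibre derivatives) and your flagging of the regularity hypothesis $T\neq 0$ are both apt.
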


We omit the proofs of both formulas,
which are similar to those of the time-independent case.

Another point to consider is the conservation of the kinetic energy.

\begin{proposition}
Let $\gamma\colon I\to M$ be a path. Then
$$
\frac\d{\d t}T 
= \frac12 \dot g(\gamma',\gamma') + g(\nabla_t \gamma',\gamma')
= 
g(\nabla_t \gamma' + G^{-1} \cdot \dot G \cdot \gamma', \gamma')
-
\frac12 \dot g(\gamma',\gamma') 
,
$$
where the kinetic energy is evaluated along~$\gamma$.
\end{proposition}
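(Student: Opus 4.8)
The statement is a chain of two equalities, and I would establish them separately. The first equality, $\frac{\d}{\d t}T = \tfrac12\dot g(\gamma',\gamma') + g(\nabla_t\gamma',\gamma')$, is essentially metric compatibility combined with the Leibniz rule, the only wrinkle being that the parameter $t$ plays two roles at once: it is both the argument of the metric family $(g_t)$ and the parameter of the curve $\gamma$. The second equality is purely pointwise algebra concerning the musical isomorphism $G$.

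For the first equality, the plan is to disentangle the two roles of $t$ by introducing the auxiliary two-variable function $F(t,s) = g_t(\gamma'(s),\gamma'(s))$, so that $2T(t) = F(t,t)$. The chain rule on the diagonal gives
$$
2\,\frac{\d T}{\d t} = \partial_t F(t,t) + \partial_s F(t,t)\,.
$$
The first term records the \emph{explicit} time dependence of the metric, hence $\partial_t F = \dot g(\gamma',\gamma')$. In the second term the time label is frozen, so I may invoke that $^t\nabla$ is the Levi-Civita connection of $g_t$: by metric compatibility at fixed $t$, $\partial_s F = \frac{\d}{\d s}\,g_t(\gamma'(s),\gamma'(s)) = 2\,g_t(\,^t\nabla_s\gamma',\gamma')$. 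Evaluating at $s=t$ produces $2\,g(\nabla_t\gamma',\gamma')$ in the paper's notation, and dividing by two yields the claimed first equality.

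For the second equality it suffices to prove the pointwise identity $g(G^{-1}\cdot\dot G\cdot v,w) = \dot g(v,w)$ for all tangent vectors $v,w$. This is immediate from the definitions: writing $u = G^{-1}\cdot\dot G\cdot v$ and using $g(u,w) = \langle Gu,w\rangle$ together with $\langle \dot G v,w\rangle = \dot g(v,w)$, we obtain $g(G^{-1}\cdot\dot G\cdot v,w) = \langle \dot G v,w\rangle = \dot g(v,w)$. Substituting $v=w=\gamma'$ and rearranging then closes the chain, since
\begin{align*}
g(\nabla_t\gamma' + G^{-1}\cdot\dot G\cdot\gamma', \gamma') - \tfrac12\dot g(\gamma',\gamma')
&= g(\nabla_t\gamma',\gamma') + g(G^{-1}\cdot\dot G\cdot\gamma',\gamma') - \tfrac12\dot g(\gamma',\gamma') \\
&= g(\nabla_t\gamma',\gamma') + \tfrac12\dot g(\gamma',\gamma')\,,
\end{align*}
which matches the middle expression.

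The main obstacle is the bookkeeping in the first equality: one must be confident that ``$\nabla_t\gamma'$'' really denotes the covariant derivative taken with the frozen-time Christoffel symbols $^t\Gamma$ evaluated at the current instant, matching the convention fixed in eq.~\eqref{eq:geodesic-coordinates}, so that metric compatibility of $g_t$ with $^t\nabla$ applies cleanly to the $\partial_s F$ term. Once the two roles of $t$ are separated by the device $F(t,s)$, everything else is routine.
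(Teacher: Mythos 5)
Your proof is correct; the paper itself states this proposition without proof, and your argument (separating the explicit $t$-dependence of $g_t$ from the curve parameter via $F(t,s)$, applying metric compatibility of $^t\nabla$ at frozen time, and the pointwise identity $g(G^{-1}\cdot\dot G\cdot v,w)=\dot g(v,w)$) is exactly the computation the authors intend and is consistent with their convention for $\nabla_t\gamma'$ fixed in eq.~\eqref{eq:geodesic-coordinates}.
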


As a consequence, 
for a critical path of the energy functional, 
the kinetic energy is conserved if and only if
$\gamma'$ is an isotropic vector of the 
(possibly degenerate) symmetric 2-covariant tensor field~$\dot g$.

In the time-independent case,
the kinetic energy along an arbitrary path is conserved if and only if 
$\gamma'$ is orthogonal to 
$\nabla_t \gamma'$.

%%%%%%%%%%%%%%%%%%%%%%%%%%%%%%%%%%%%%%%%%%%%%%%%%%%%%%%%%%%%%%%%
%%%%%%%%%%%%%%%%%%%%%%%%%%%%%%%%%%%%%%%%%%%%%%%%%%%%%%%%%%%%%%%%
\section{The Levi-Civita connection associated with the suspension of a metric}
\label{sec:Levi-Civita}
%%%%%%%%%%%%%%%%%%%%%%%%%%%%%%%%%%%%%%%%%%%%%%%%%%%%%%%%%%%%%%%%
%%%%%%%%%%%%%%%%%%%%%%%%%%%%%%%%%%%%%%%%%%%%%%%%%%%%%%%%%%%%%%%%

We consider again the suspension
$
\widehat g = 
\d t \otimes \d t + \rho_2^*(g_t)
$
of a time-dependent metric.
We will also denote by $x^0=t$ 
the natural coordinate of~$\R$,
and represent by $(x^\mu) = (x^0;x^i)$
coordinates in the product manifold 
$\widehat M = \R \times M$.
If $g_t$ has matrix representation
$\mathbf{G}_t(x) = (g_{ij}(t,x))$ at a point~$x$,
the matrix representation of $\widehat g$ is 
$
\left(\begin{smallmatrix}
  1 & 0 \\
  0 & \mathbf{G}_t(x)
\end{smallmatrix}\right)
$.

The $g_t$, as well as the suspension~$\widehat g$,
have associated Levi-Civita connections,
with Christoffel's symbols $\Gamma_{ij}^{k}$ 
(we omit the~$t$ for the sake of clarity)
and
$\widehat \Gamma_{\mu\nu}^{\rho}$, respectively.
An easy computation yields 
$$
\widehat \Gamma_{ij}^{k} =
\Gamma_{ij}^{k}\,,
\qquad 
\widehat \Gamma_{ij}^{0} = 
-\frac12 \parder{g_{ij}}{t}\,,
\qquad 
\widehat \Gamma_{i0}^{k} = 
\widehat \Gamma_{0i}^{k} = 
\frac12 g^{k\ell} \parder{g_{i\ell}}{t}\,,
$$ 
while the others are zero:
$
\widehat \Gamma_{i0}^{0} = 
\widehat \Gamma_{0i}^{0} =
\widehat \Gamma_{00}^{\mu} =
0
$.

We can use these Christoffel's symbols to compute 
covariant derivatives,
as for instance 
$$
\widehat\nabla_{\parder{}{x^i}} \parder{}{x^j} =
\Gamma_{ij}^k \parder{}{x^k} - \frac12 \parder{g_{ij}}{t} \parder{}{t}
\,,
\qquad
\widehat\nabla_{\parder{}{t}} \parder{}{x^j} =
\frac12 g^{k\ell} \parder{g_{\ell j}}{t} \parder{}{x^k}
\,,
\qquad
\widehat\nabla_{\parder{}{t}} \parder{}{t} = 0
\,.
$$
Also we can compute the covariant derivative 
$\widehat\nabla_{\!s} \widehat w$
of a vector field $\widehat w$
along a path $\widehat\gamma$ in~$\widehat M$
(here we use $s$ for the parameter, 
since for the time being we don't suppose it to coincide with the $t$ coordinate).
In particular, one can compute
the ``acceleration''
$\widehat\nabla_{\!s} \widehat\gamma'$
of a path
$\widehat\gamma(s) = (\gamma^0(s);\gamma^i(s))$,
whose components are
$$
\ddot\gamma^0 - 
\frac12 \parder{g_{ij}}{t} \dot\gamma^i \dot\gamma^j
\qquad
\text{and}
\qquad 
\ddot\gamma^k +
\Gamma_{ij}^k \dot\gamma^i \dot\gamma^j +
g^{k\ell} \parder{g_{i\ell}}{t} \dot\gamma^0 \dot\gamma^i
\,.
$$

We are especially interested in the case where
$\widehat\gamma$ is indeed a section of 
$\R \times M \to \R$,
namely  
$\widehat\gamma(t) = (t;\gamma^i(t))$.
Then the preceding components become
$$
- \frac12 \parder{g_{ij}}{t} \dot\gamma^i \dot\gamma^j
\qquad
\text{and}
\qquad 
\ddot\gamma^k +
\Gamma_{ij}^k \dot\gamma^i \dot\gamma^j +
g^{k\ell} \parder{g_{i\ell}}{t} \dot\gamma^i
\,,
$$
so we can write
\begin{equation}
\widehat\nabla_{\!t} \widehat\gamma' =
-\frac12 \dot g(\gamma',\gamma') \,\parder{}{t} 
+
\left(
\nabla_t \gamma' + 
G^{-1} \cdot \dot G \cdot \gamma'
\right)
.
\end{equation}
In the last equation we recognize the equation of the geodesics of a time-dependent metric,
eq.~\eqref{eqn:geodesic_of_g}.
So we arrive to the following geometric interpretation:
\begin{lemma}
$\widehat\gamma$ is a geodesic of~$\widehat\nabla$ 
if and only if
$\gamma'$ is isotropic with respect to~$\dot g$
and 
$\gamma$ is a geodesic of the time-dependent metric~$g$.
\end{lemma}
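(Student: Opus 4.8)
The plan is to read off both conditions directly from the decomposition of the acceleration $\widehat\nabla_{\!t}\widehat\gamma'$ that was computed immediately before the statement, so essentially no new computation is needed. First I would fix the parametrization: under the identification $\T(\R\times M)=\T\R\oplus\T M$ of Appendix~\ref{app:products}, the velocity of the section $\widehat\gamma(t)=(t,\gamma(t))$ is $\widehat\gamma'=\parder{}{t}+\gamma'$, so $\widehat\gamma$ is parametrized by its $\R$-coordinate $t$, and by definition it is a geodesic of $\widehat\nabla$ exactly when $\widehat\nabla_{\!t}\widehat\gamma'=0$.

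The key step is then purely algebraic. The displayed formula writes the acceleration as a sum of a term along $\parder{}{t}$ and a term lying in $\T M$,
\[
\widehat\nabla_{\!t}\widehat\gamma'=-\tfrac12\,\dot g(\gamma',\gamma')\,\parder{}{t}+\bigl(\nabla_t\gamma'+G^{-1}\cdot\dot G\cdot\gamma'\bigr),
\]
and, since this is precisely the direct-sum decomposition $\T\R\oplus\T M$, the single vector equation $\widehat\nabla_{\!t}\widehat\gamma'=0$ is equivalent to the simultaneous vanishing of its two components. The $\T\R$-component vanishes iff $\dot g(\gamma',\gamma')=0$, which is exactly the assertion that $\gamma'$ is isotropic for the symmetric tensor $\dot g$; and the $\T M$-component vanishes iff $\nabla_t\gamma'+G^{-1}\cdot\dot G\cdot\gamma'=0$, which is eq.~\eqref{eqn:geodesic_of_g}, the defining equation of a geodesic of the time-dependent metric $g$. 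Combining the two equivalences yields the claim.

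I do not expect a serious obstacle: all the genuine work was already done in computing the Christoffel symbols of $\widehat g$ and the resulting acceleration, so the argument reduces to separating components. The only points deserving a remark are that this separation is legitimate because the splitting of $\T(\R\times M)$ is a \emph{direct} sum (hence the two conditions are independent), and that we are testing $\widehat\gamma$ with respect to its natural parametrization by $t$, which is the parameter with respect to which eq.~\eqref{eqn:geodesic_of_g} is written.
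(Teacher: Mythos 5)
Your argument is correct and is precisely the one the paper intends: the lemma is stated as an immediate consequence of the displayed decomposition of $\widehat\nabla_{\!t}\widehat\gamma'$ into its $\T\R$- and $\T M$-components, and reading off the vanishing of each component is exactly the paper's (implicit) proof. Your added remarks on the direct-sum splitting and the fixed parametrization by $t$ are accurate and do not change the route.
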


%%%%%%%%%%%%%%%%%%%%%%%%%%%%%%%%%%%%%%%%%%%%%%%%%%%%%%%%%%%%%%%%
%%%%%%%%%%%%%%%%%%%%%%%%%%%%%%%%%%%%%%%%%%%%%%%%%%%%%%%%%%%%%%%%
\section{Connections on \texorpdfstring{$\R \times M$}{}. Time-dependent connections}
\label{sec:connection-RxM}
%%%%%%%%%%%%%%%%%%%%%%%%%%%%%%%%%%%%%%%%%%%%%%%%%%%%%%%%%%%%%%%%
%%%%%%%%%%%%%%%%%%%%%%%%%%%%%%%%%%%%%%%%%%%%%%%%%%%%%%%%%%%%%%%%

Inspired by the results of the preceding section,
let us consider a connection 
$\widehat\nabla$ on the manifold 
$\widehat M = \R \times M$,
with Christoffel's symbols 
$\widehat\Gamma_{\mu\nu}^{\rho}$,
with which we can compute 
covariant derivatives 
$
\widehat\nabla_{\widehat X} {\widehat Y}
$
of vector fields on~$\widehat M$.
Here we use notations similar as those of the preceding section,
e.g.\ $x^0=t$, $(x^\mu) = (x^0;x^i)$.

Consider two vector fields on $\R \times M$
$$
\widehat X = 
f^0 \parder{}{t} + f^i \parder{}{x^i} =
f^0 \parder{}{t} + X
\,,\qquad 
\widehat Y = 
g^0 \parder{}{t} + g^i \parder{}{x^i} =
g^0 \parder{}{t} + Y
$$ 
and let us also separate the time and space indices in 
the covariant derivative:
\begin{multline*}
\widehat\nabla_{\widehat X} {\widehat Y}
=
\left( 
f^0 \dot g^0 + 
\Lie_X g^0 +
\widehat\Gamma_{00}^0 f^0 g^0+ 
\widehat\Gamma_{i0}^0 f^i g^0+ 
\widehat\Gamma_{0j}^0 f^0 g^j +
\widehat\Gamma_{ij}^0 f^i g^j 
\right)
\parder{}{t}
\\
+
\left( 
f^0 \dot g^k +
\Lie_X g^k + 
\widehat\Gamma_{ij}^k f^i g^j
\right) 
\parder{}{x^k}
+
\left( 
\widehat\Gamma_{00}^k f^0 g^0 + 
\widehat\Gamma_{i0}^k f^i g^0 + 
\widehat\Gamma_{0j}^k f^0 g^j 
\right) 
\parder{}{x^k} \,.
\end{multline*}

In this expression there is a number of terms that seemingly have  geometric interpretations.
One may notice,
by means of a change of coordinates in~$M$,
that they actually are geometric objects along the projection to~$M$,
but indeed a geometric proof can be provided.
This is the contents of the next theorem.

\begin{theorem}
\label{th:nabladotCharacterization}
For two vector fields
$\widehat X = f^0 \parder{}{t} + X$
and 
$\widehat Y = g^0 \parder{}{t} + Y$
on $\R \times M$,
the covariant derivative can be expressed as
\begin{multline}
\widehat\nabla_{\widehat X} {\widehat Y} 
=
\left(\strut 
f^0 \dot g^0 +
\Lie_X g^0 +
\lambda f^0 g^0 + 
\alpha (X) g^0 + 
\beta(Y) f^0 + \varepsilon(X,Y) 
\right) \parder{}{t}
\:+
\\
+
\left(
f^0 \dot Y + \nabla_X Y + 
C f^0 g^0 + 
A (X) g^0 + 
B (Y) f^0 
\right)
,
\end{multline}
where 
\begin{itemize}
\itemsep 0pt
\item 
$\lambda$ is a time-dependent function,
\item
$\alpha$ and~$\beta$ are time-dependent 1-forms,
\item 
$\varepsilon$ is a time-dependent 2-covariant tensor field,
\item 
$C$ is a time-dependent vector field,
\item 
$A,B$ are time-dependent endomorphisms of $\T M$,
and 
\item 
$\nabla \equiv {}^t\nabla$ is a time-dependent connection on~$M$.
\end{itemize}
\end{theorem}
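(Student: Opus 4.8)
The plan is to define each of the eight objects intrinsically, by feeding $\widehat\nabla$ only the two canonical kinds of vector fields on $\R\times M$, and then to recover the general formula by bilinearity and the Leibniz rule. Throughout write $\partial_t$ for $\parder{}{t}$. Recall from Appendix~\ref{app:products} the splitting $\vf(\R\times M)=\vf(\rho_1)\oplus\vf(\rho_2)$: every vector field $\widehat Z$ decomposes uniquely as $\widehat Z=\d t(\widehat Z)\,\partial_t+\mathrm{vert}(\widehat Z)$, where the vertical part $\mathrm{vert}(\widehat Z)$ is a time-dependent vector field on~$M$. I will read off the time component of a covariant derivative with $\d t$ and its vertical component with $\mathrm{vert}$. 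Evaluating $\widehat\nabla$ on the four basic combinations of $\partial_t$ and of vertical fields $X,Y$ suggests the definitions
\[
\widehat\nabla_X Y=\varepsilon(X,Y)\,\partial_t+\nabla_X Y,\qquad
\widehat\nabla_X\partial_t=\alpha(X)\,\partial_t+A(X),
\]
\[
\widehat\nabla_{\partial_t}Y=\beta(Y)\,\partial_t+\dot Y+B(Y),\qquad
\widehat\nabla_{\partial_t}\partial_t=\lambda\,\partial_t+C,
\]
so that $\varepsilon,\nabla$ come from the first equation, $\alpha,A$ from the second, $\beta,B$ from the third (after subtracting the ordinary time derivative $\dot Y$), and $\lambda,C$ from the last.

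Next I would check that these objects are genuinely geometric, i.e.\ tensorial along~$\rho_2$, which amounts to $C^\infty(\R\times M)$-linearity in the relevant slots. Because a connection is $C^\infty$-linear in its direction argument, $\alpha(X)=\d t(\widehat\nabla_X\partial_t)$ and $A(X)=\mathrm{vert}(\widehat\nabla_X\partial_t)$ are linear in~$X$, hence define a time-dependent $1$-form and a time-dependent endomorphism; the same argument makes $\varepsilon$ and $\nabla$ linear in their lower slot. For the upper slot one uses the Leibniz rule together with $\d t(Y)=0$ for vertical~$Y$: from $\widehat\nabla_X(hY)=X(h)\,Y+h\,\widehat\nabla_X Y$ one gets $\varepsilon(X,hY)=h\,\varepsilon(X,Y)$, while the vertical part reproduces exactly $\nabla_X(hY)=X(h)\,Y+h\,\nabla_X Y$, so $\nabla$ is a bona fide time-dependent connection on~$M$. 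Likewise $\beta(Y)=\d t(\widehat\nabla_{\partial_t}Y)$ is tensorial in~$Y$, since the term $\partial_t(h)\,Y=\dot h\,Y$ produced by the Leibniz rule is annihilated by~$\d t$. The function $\lambda=\d t(\widehat\nabla_{\partial_t}\partial_t)$ and the vector field $C=\mathrm{vert}(\widehat\nabla_{\partial_t}\partial_t)$ need no tensoriality check, being evaluations on the single canonical field~$\partial_t$.

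The step I expect to be the main obstacle is the tensoriality of $B$, which is precisely the term that is invisible in the Riemannian case and that justifies isolating the ordinary time derivative $\dot Y$. Writing $B(Y)=\mathrm{vert}(\widehat\nabla_{\partial_t}Y)-\dot Y$ and computing $B(hY)$ for $h\in C^\infty(\R\times M)$, the Leibniz rule contributes $\partial_t(h)\,Y=\dot h\,Y$ to the first term, whereas $\dot{(hY)}=\dot h\,Y+h\,\dot Y$ contributes the opposite $\dot h\,Y$; these cancel, leaving $B(hY)=h\,B(Y)$. Thus $B$ is a well-defined time-dependent endomorphism exactly because the non-tensorial time-derivative of $h$ has been subtracted off, and this cancellation is the conceptual heart of the statement.

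Finally I would reassemble the general formula. Using $C^\infty(\R\times M)$-linearity of $\widehat\nabla$ in the first argument and the Leibniz rule in the second, together with $\partial_t(g^0)=\dot g^0$ and $X(g^0)=\Lie_X g^0$, I expand $\widehat\nabla_{f^0\partial_t+X}(g^0\partial_t+Y)$ and substitute the four defining identities above. Collecting the $\partial_t$-components yields $f^0\dot g^0+\Lie_X g^0+\lambda f^0 g^0+\alpha(X)g^0+\beta(Y)f^0+\varepsilon(X,Y)$, and the vertical components yield $f^0\dot Y+\nabla_X Y+Cf^0 g^0+A(X)g^0+B(Y)f^0$, which is the claimed expression; comparison with the coordinate formula preceding the theorem confirms that, e.g., $\lambda=\widehat\Gamma_{00}^0$, $A=\widehat\Gamma_{i0}^k\,\d x^i\otimes\partial_k$ and $B=\widehat\Gamma_{0j}^k\,\d x^j\otimes\partial_k$. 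Smoothness in~$t$ of all eight objects is inherited from that of $\widehat\nabla$.
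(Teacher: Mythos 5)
Your proposal is correct and follows essentially the same route as the paper: evaluate $\widehat\nabla$ on the four basic pairs built from $\partial/\partial t$ and vertical fields, define the eight objects via the projections $\T(\rho_1)$ and $\T(\rho_2)$, verify tensoriality (your explicit cancellation of $\dot h\,Y$ in the check for $B$ is exactly the ``easy calculation'' the paper leaves implicit), and reassemble by bilinearity and the Leibniz rule.
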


\begin{proof}
The proof follows from writing the covariant derivative for four particular values of the pair
$(\widehat X,\widehat Y)$,
given by either by $\parder{}{t}$, 
or ``vertical'' vector fields $X,Y$.
Then one sees that these objects are defined geometrically as follows:
\begin{align*}
& \lambda =
\left\langle  \d t,
\T(\rho_1) \circ \widehat\nabla_{\parder{}{t}} {\parder{}{t}}
\right\rangle
,
\\
& \alpha(X) = 
\left\langle  \d t,
\T(\rho_1) \circ \widehat\nabla_{X} {\parder{}{t}}
\right\rangle
\,,
\\
& \beta(Y) = 
\left\langle  \d t,
\T(\rho_1) \circ \widehat\nabla_{\parder{}{t}} Y 
\right\rangle
\,,
\\
& \varepsilon(X,Y) = 
\left\langle  \d t,
\T(\rho_1) \circ \widehat\nabla_{X} Y 
\right\rangle
\,,
\\
& C = 
\T(\rho_2) \circ \widehat\nabla_{\parder{}{t}} {\parder{}{t}}
\,,
\\
& A (X) = 
\T(\rho_2) \circ \widehat\nabla_{X} \parder{}{t}
\,,
\\
& B (Y) = 
\T(\rho_2) \circ \widehat\nabla_{\parder{}{t}} Y - \dot Y
\,,
\\
& \nabla_X Y =
\T(\rho_2) \circ \widehat\nabla_X Y 
\,.
\end{align*}
From this it is clear that 
$\lambda$ is a function and $C$ is a vector field,
and that 
$\alpha$ is a differential 1-form and 
$A$ is an endomorphism
(all of them time-dependent).
An easy calculation shows that 
$\beta$, $B$ and $\epsilon$ are also well defined,
and finally that 
$\nabla_X Y$ is a covariant derivative on~$M$
depending upon the $t$ parameter.
\end{proof}

So, a connection $\widehat\nabla$
is equivalently described by the data 
$(\lambda,\alpha,\beta,\varepsilon,C,A,B,\nabla)$.
Then one can characterize properties of~$\widehat\nabla$
in terms of these elements.
For instance, 
let us compute the covariant derivative with respect to the \emph{suspensions} $\widetilde X,\widetilde Y$
of time-dependent vector fields $X,Y$:
$$
\widehat\nabla_{\widetilde X} {\widetilde Y} 
=
\left(\strut 
\lambda + 
\alpha (X) + 
\beta(Y) + \varepsilon(X,Y) 
\right) \parder{}{t}
+
\left(
\dot Y + \nabla_X Y + 
C + 
A (X) + 
B (Y)
\right)
,
$$
From this it is clear that 
the covariant derivation of suspension vector fields
is another suspension iff 
$\lambda=1$ and $\alpha$, $\beta$ and $\varepsilon$ are zero.

Nevertheless, if we start from 
time-dependent vector fields, 
it seems natural to project this covariant derivative to $\T M$
to extract its ``vertical'' part:
$$
\T(\rho_2) \circ \widehat\nabla_{\widetilde X} {\widetilde Y}
=
\dot Y + \nabla_X Y + C + A(X) + B(Y)
\,,
$$
which is another time-dependent vector field on~$M$.
This motivates the following definition:

\begin{definition}
A \dfn{time-dependent covariant derivation operator} 
$\dotnabla$ on~$M$
is a map 
that sends two time-dependent vector fields $X,Y$ to another one,
$(X,Y) \mapsto \dotnabla_X Y$,
of the form 
\begin{equation}
\label{eq:tdcovariant}
\dotnabla_X Y =
\dot Y + \nabla_X Y + C + A(X) + B(Y)\,,
\end{equation}
where $\nabla$ is a time-dependent connection on~$M$,
$C$ is a time-dependent vector field,
and $A$ and~$B$ are time-dependent endomorphisms of $\T M$.
Sometimes we will write 
$\dotnabla = (\nabla,C,A,B)$.
\end{definition}

However, this map lacks the usual properties of a covariant derivation of the time-independent case.
For instance,
let us define the time-dependent covariant derivative of a time-dependent function
$f \colon \R \times M \to \R$ as 
$
\dotnabla_X f = 
\parder{f}{t} + \Lie_X f
$.
Then
$$
\dotnabla_X (f Y) = 
f \dotnabla_X Y + (\dotnabla_X f) Y + (1 {-} f) (C + A(X))
\,,
$$
from which we see that
the operators $\dotnabla_X$ satisfy the Leibniz rule on products $fY$
if, and only if, 
$C = 0$ and $A = 0$.

Similarly,
one sees that
$\dotnabla_X Y$ is not $\Cinfty(\R {\times} M)$-linear in~$X$.
Indeed, $\dotnabla_0$ is never the zero operator.
This seemingly strange behaviour was not unexpected:
while the time-dependent vector fields form a vector space,
their suspensions are rather an affine space.

%%%%%%%%%%%%%%%%%%%%%%%%%%%%%%%%%%%%%%%%%%%%%%%%%%%%%%%%%%%%%%%%
\subsection*{The time-derivative of a time-dependent connection}
%%%%%%%%%%%%%%%%%%%%%%%%%%%%%%%%%%%%%%%%%%%%%%%%%%%%%%%%%%%%%%%%

It is well known that the Christoffel symbols $\Gamma_{ij}^k$ 
of a connection $\nabla$ are \emph{not} the components of a tensor field.
This can be seen in several ways, for instance by looking at the 
change of coordinates, or by realizing that connections on a manifold form an infinite-dimensional affine space, but not a vector space.

What is remarkable, though, is that, 
if instead of a connection $\nabla$ 
indeed we have a 
one-parameter family of connections, say $^t\nabla$,
then its ``time-derivative'' \emph{is} a tensor field.
\begin{proposition}
Let $^t\nabla$, $t \in \R$, be a one-parameter family of connections on a manifold~$M$.
Then the equation 
\begin{equation}
\dot\Gamma (X,Y) =
\lim_{\varepsilon \to 0} 
\frac{ ^{t+\varepsilon}\nabla_X Y - {}^t\nabla_X Y}{\varepsilon}
\,,
\end{equation}
where $X,Y$ are arbitrary vector fields on~$M$,
defines a time-dependent $(2,1)$-tensor field on~$M$.
\end{proposition}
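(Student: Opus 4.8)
The plan is to reduce everything to the standard fact that the difference of two linear connections on $M$ is a $(2,1)$-tensor field, and then to check that this tensoriality survives both the division by $\varepsilon$ and the passage to the limit. Concretely, for fixed $t$ and each $\varepsilon \neq 0$ I would set
$$
S_\varepsilon(X,Y) = \frac{{}^{t+\varepsilon}\nabla_X Y - {}^t\nabla_X Y}{\varepsilon}\,,
$$
and first argue that every $S_\varepsilon$ is a $(2,1)$-tensor field on $M$; then $\dot\Gamma = \lim_{\varepsilon\to 0} S_\varepsilon$ will inherit this property.

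For the tensoriality of $S_\varepsilon$ I would invoke the usual tensor characterization lemma: a map $\X(M)\times\X(M)\to\X(M)$ is a $(2,1)$-tensor field precisely when it is $\Cinfty(M)$-bilinear. The $\R$-bilinearity of $S_\varepsilon$ is immediate. For $\Cinfty(M)$-linearity in the first slot, both ${}^{t+\varepsilon}\nabla$ and ${}^t\nabla$ are $\Cinfty(M)$-linear in $X$, so their difference is too. The one nontrivial point is linearity in the second slot: here each connection satisfies the Leibniz rule ${}^s\nabla_X(fY) = f\,{}^s\nabla_X Y + (Xf)\,Y$, and the derivation term $(Xf)Y$ does not depend on the connection; hence upon subtracting ${}^t\nabla_X(fY)$ from ${}^{t+\varepsilon}\nabla_X(fY)$ the $(Xf)Y$ terms cancel and
$$
{}^{t+\varepsilon}\nabla_X(fY) - {}^t\nabla_X(fY) = f\left({}^{t+\varepsilon}\nabla_X Y - {}^t\nabla_X Y\right)\,,
$$
so that $S_\varepsilon(X,fY) = f\,S_\varepsilon(X,Y)$. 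This cancellation is the conceptual heart of the statement; it is exactly the assertion that connections form an affine space modeled on $(2,1)$-tensor fields.

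It then remains to pass to the limit $\varepsilon\to 0$. I would check existence and smoothness in coordinates: writing ${}^t\nabla_X Y = \bigl(X(Y^k) + \Gamma^k_{ij}(t,\cdot)\,X^i Y^j\bigr)\partial_k$, the $t$-independent terms $X(Y^k)$ cancel in the difference, leaving
$$
S_\varepsilon(X,Y) = \frac{\Gamma^k_{ij}(t+\varepsilon,\cdot) - \Gamma^k_{ij}(t,\cdot)}{\varepsilon}\,X^i Y^j\,\partial_k\,,
$$
whose limit is $\partial_t\Gamma^k_{ij}\,X^i Y^j\,\partial_k$; this exists and is smooth in $(t,x)$ because the family ${}^t\nabla$ is smooth in $t$. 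Finally, since every $S_\varepsilon$ satisfies the $\Cinfty(M)$-bilinearity identities above and multiplication by a fixed $f$ commutes with the limit, those identities pass to $\dot\Gamma$, so $\dot\Gamma$ is a $\Cinfty(M)$-bilinear map depending smoothly on $t$, i.e.\ a time-dependent $(2,1)$-tensor field.

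I do not expect a serious obstacle: the only genuinely analytic ingredient is the existence and smoothness of the limit, which the coordinate computation settles at once, while the algebraic content is entirely carried by the cancellation of the Leibniz terms. If one prefers an intrinsic argument avoiding coordinates, the existence of the limit may instead be taken as part of the meaning of ``smooth one-parameter family,'' after which the limit of the tensor fields $S_\varepsilon$ is again a tensor field by continuity of the tensoriality constraints.
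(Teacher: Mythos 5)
Your proposal is correct and follows essentially the same route as the paper: the key point in both is that the numerator is a difference of two connections, hence a $(2,1)$-tensor field (the Leibniz terms cancel), and the coordinate computation identifies the limit's components as $\partial_t\Gamma^k_{ij}$. Your write-up merely makes explicit the passage to the limit that the paper leaves implicit.
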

\begin{proof}
The proof is simply a computation that shows that the components of $\dot\Gamma$
are indeed the time-derivatives 
$\dot\Gamma_{ij}^k$
of the Christoffel symbols.
In other words,
$$
\dot\Gamma = 
\dot\Gamma_{ij}^k \: \d x^i \otimes \d x^j \otimes \parder{}{x^k}
\,.
$$
Notice that the tensor character of $\dot\Gamma$
is ensured by the fact that the numerator of the fraction is the difference of two connections,
which is a tensor field.
\end{proof}
\begin{definition}\label{def:dotGamma}
We call the time-dependent tensor field $\dot\Gamma$ the 
\dfn{time-derivative} of the time-dependent connection. 
\end{definition}

%%%%%%%%%%%%%%%%%%%%%%%%%%%%%%%%%%%%%%%%%%%%%%%%%%%%%%%%%%%%%%%%
%%%%%%%%%%%%%%%%%%%%%%%%%%%%%%%%%%%%%%%%%%%%%%%%%%%%%%%%%%%%%%%%
\section{Parallel transport and geodesics}
\label{sec:parallel-transport}
%%%%%%%%%%%%%%%%%%%%%%%%%%%%%%%%%%%%%%%%%%%%%%%%%%%%%%%%%%%%%%%%
%%%%%%%%%%%%%%%%%%%%%%%%%%%%%%%%%%%%%%%%%%%%%%%%%%%%%%%%%%%%%%%%

As in the preceding section,
we start with the time-extended manifold $\widehat M = \R \times M$
provided with a connection $\widehat\nabla$,
and shortly after we will use a time-dependent covariant derivation operator~$\dotnabla$.

%%%%%%%%%%%%%%%%%%%%%%%%%%%%%%%%%%%%%%%%%%%%%%%%%%%%%%%%%%%%%%%%
\subsection*{Covariant derivative along a path}

If we have a path 
$\widehat\gamma = (\gamma^0,\gamma) \colon I \to \widehat M$,
the pullback of $\widehat \nabla$ by~$\widehat\gamma$
yields a connection on the pullback vector bundle 
$\widehat\gamma^*(\T \widehat M)$,
whose sections are the vector fields along~$\widehat\gamma$.
With the pullback connection one can compute the covariant derivative 
$\widehat\nabla_s \widehat w$
of
$\widehat w \in \vf(\widehat\gamma)$ 
with respect to the canonical vector field
$\totder{}{s}$
of~$I \subset \R$.
Then, using the notations of the preceding section,
and expressing 
$\widehat w$ 
as 
$\widehat w(s) = 
w^0(s) \left. \totder{}{t} \right|_{\gamma^0(s)}
+
w(s)
\in 
\T_{\gamma^0(s)} \R \oplus \T_{\gamma(s)}M
$,
we can organize the result as 
\begin{multline}
\qquad
\widehat\nabla_{\!s} \,\widehat w 
=
\left(\strut
\dot w^0
+ \lambda \dot\gamma^0 w^0
+ \alpha(\gamma') w^0
+ \beta(w) \dot\gamma^0 
+ \varepsilon(\gamma', w)
\right) \parder{}{t} \circ \widehat\gamma
\\ 
+
\nabla_{\!s} \,w + 
\dot\gamma^0 w^0 \,C +
w^0 A(\gamma') +
\dot\gamma^0 B(w) 
\,.
\qquad
\end{multline}
In this expression 
$\nabla_{\!s} w$
is the covariant derivation of $w \in \vf(\gamma)$
with respect to the time-parametrized family of connections $\nabla$,
which means that, in coordinates, 
$$
\nabla_{\!s} w (s) 
=
\left(
\dot w^k(s) + 
\Gamma_{ij}^k(s,\gamma(s)) \,\dot\gamma^i(s) \,w^j(s) 
\right)
\left.
\parder{}{x^k} 
\right|_{\gamma(s)}
.
$$

Nevertheless, for our purposes,
the path should be a section of the projection 
$\R \times M \to \R$,
that is,
$\gamma^0(s)=s$.
In equivalent terms,
we consider a path $\gamma \colon I \to M$,
and then we extend it to
$\widehat\gamma(t) = (t,\gamma(t))$.

In the same way, 
we consider $w\in\vf(\gamma)$, a vector field along~$\gamma$,
and its associated suspension
$\widetilde w(t) = (1,w(t))$,
which is a vector field along~$\widetilde \gamma$.
Then the preceding covariant derivative becomes 
$$
\widehat\nabla_{\!t} \,\widehat w 
=
\left(\strut
\lambda  
+ \alpha(\gamma') 
+ \beta(w) 
+ \varepsilon(\gamma', w)
\right) \parder{}{t} \circ \widehat\gamma
+
\left(\strut
\nabla_{\!t} \,w + 
C +
A(\gamma') +
B(w) 
\right)
\,.
$$
As in the preceding section we retain its vertical part,
and use it as a definition:
\begin{definition}
Given a time-dependent covariant derivative operator 
$\dotnabla = (\nabla,C,A,B)$ on~$M$,
the   
\dfn{time-dependent covariant derivative of a vector field along a path}
is the operator $\dotnabla_{t}$ defined as
\begin{equation}
\dotnabla_{t} w 
= 
\nabla_{\!t} w + 
C +
A(\gamma') +
B(w)\,,
\end{equation}
for any $w \in \vf(\gamma)$.
\end{definition}

%%%%%%%%%%%%%%%%%%%%%%%%%%%%%%%%%%%%%%%%%%%%%%%%%%%%%%%%%%%%%%%%
\subsection*{Parallel transport}

For the usual time-independent connections,
a vector field
$\widehat w \in \vf(\widehat\gamma)$ 
is said to be parallel (along $\widehat\gamma$)
when
$
\widehat\nabla_{\!t} \widehat w 
= 0
$.
This is a time-dependent linear homogeneous differential equation for
$(w^0,w)$
and has a unique solution for a given initial condition
$(w^0(t_\circ),w(t_\circ))$.

Following our preceding discussions,
we will extend this notion to the time-dependent setting in a similar way:
\begin{definition}
A vector field $w$ along a path~$\gamma$ is 
\dfn{parallel} 
with respect to a time-dependent covariant derivative operator when
$
\dotnabla_{t} w = 0
$.
\end{definition}
This means that $w$ satisfies the non-homogeneous linear differential equation 
\begin{equation}
\nabla_{\!t} w + C + A(\gamma') + B(w) = 0
\,.
\end{equation}
In coordinates this reads
\begin{equation*}
\dot w^k + 
\Gamma_{ij}^k \dot\gamma^i w^j +
C^k + A_{\:i}^k \dot\gamma^i + B_{\:i}^k w^i
=
0
\,.
\end{equation*}

%%%%%%%%%%%%%%%%%%%%%%%%%%%%%%%%%%%%%%%%%%%%%%%%%%%%%%%%%%%%%%%%
\subsection*{Geodesics}

\begin{definition}
A path $\gamma$ is a \dfn{geodesic} of a time-dependent covariant derivation operator if 
$
\dotnabla_{t} \gamma' = 0
$.
\end{definition}
That is,
\begin{equation}
\nabla_{\!t} \gamma' + 
C +
(A + B) (\gamma')
= 
0
\,.
\end{equation}
In coordinates this reads
$$
\ddot \gamma^k + 
\Gamma_{ij}^k \dot\gamma^i \dot\gamma^j +
C^k + (A_{\:i}^k + B_{\:i}^k) \dot\gamma^i
=
0
\,.
$$
(Remember always that the $\Gamma$, $A$, $B$, $C$ are evaluated along $(t,\gamma(t))$.)

\medskip
In the metric case,
that is when $\widehat\nabla$ is the Levi-Civita connection of the suspension $\widehat g$,
we have $C=0$ and $A = B$, 
with 
$A = \frac12 \: G^{-1} \cdot \dot G$.
In this case we recover
eq.~\eqref{eqn:geodesic_of_g}.

%%%%%%%%%%%%%%%%%%%%%%%%%%%%%%%%%%%%%%%%%%%%%%%%%%%%%%%%%%%%%%%%
%%%%%%%%%%%%%%%%%%%%%%%%%%%%%%%%%%%%%%%%%%%%%%%%%%%%%%%%%%%%%%%%
\section{The torsion operator}
\label{sec:torsion}
%%%%%%%%%%%%%%%%%%%%%%%%%%%%%%%%%%%%%%%%%%%%%%%%%%%%%%%%%%%%%%%%
%%%%%%%%%%%%%%%%%%%%%%%%%%%%%%%%%%%%%%%%%%%%%%%%%%%%%%%%%%%%%%%%

The torsion tensor of a connection on a manifold
is defined as
$T^\nabla(X,Y) = \nabla_XY - \nabla_YX -[X,Y]$.
When providing a time-dependent version for the torsion,
we cannot expect it to preserve some properties like, for instance,
to be actually a tensor field.
So, in the same way we did for the Lie bracket in Section~\ref{sec:time-dep-lie-bracket},
in order to preserve the original concept of the torsion,
we will rely on its geometric interpretation.

Such interpretation is as follows:
given a point $p_0$ and two tangent vectors $v_0,w_0\in \T_{p_0} M$,
we perform the following four-step procedure:
\begin{itemize}
\item 
First, we parallel transport $v_0$ and $w_0$ along the geodesic passing by $p_0$ with direction $v_0$ during $\varepsilon$ time units.
Let us call the resulting point and tangent vectors $p_1$ and $v_1, w_1\in \T_{p_1}M$.
\item 
Second, we parallel transport these vectors along the geodesic defined by $p_1$ and $w_1$ during $\varepsilon$ time units.
\item 
Then, the same for the geodesic defined by $p_2$ and $v_2$ during $-\varepsilon$ time units.
\item 
Finally, for the geodesic defined by $p_3$ and $w_3$ for $-\varepsilon$ time units.
\end{itemize}
The resulting point, $c(\varepsilon) = p_4$, as a function of~$\varepsilon$,
describes a smooth path.
Its tangent vector at $\varepsilon=0$ is zero,
therefore half of its second derivative is identified with a tangent vector.
This vector corresponds (up to a sign) to the torsion tensor of the connection applied to the vectors $v,w$.
This interpretation can be found in \cite{Burke85}.

So, we take this construction and transpose it to the time-dependent setting 
by using the definitions of the preceding section.

\begin{theorem}
Let $\dotnabla = (\nabla,C,A,B)$ be a time-dependent covariant derivation operator, 
$(t,p_0) \in \R\times M$, 
$v_0,w_0\in \T_{p_0} M$, and 
$\varepsilon$.
Consider:
\begin{itemize}
\item 
$\gamma_1(s)$ the geodesic of~$\dotnabla$ defined by the initial conditions
$\gamma_1(t) = p_0$ and $\gamma_1'(t) = v_0$.
Let $p_1 = \gamma_1(t+\varepsilon)$ and $v_1,w_1\in \T_{p_1}M$ the parallel transports of $v_0$ and $w_0$ along $\gamma_1$ to the point $p_1$.
\item 
$\gamma_2(s)$ the time-dependent geodesic such that 
$\gamma_2(t+\varepsilon) = p_1$ and $\gamma_2'(t+\varepsilon) = w_1$.
Let $p_2 = \gamma_2(t+2\varepsilon)$ and $v_2,w_2\in \T_{p_2}M$ the parallel transports of $v_1$ and $w_1$ to $p_2$.
\item 
$\gamma_3(s)$ the time-dependent geodesic such that 
$\gamma_3(t+2\varepsilon) = p_2$ and $\gamma_3'(t+2\varepsilon) = v_2$.
Let $p_3 = \gamma_3(t+\varepsilon)$ and $w_3\in \T_{p_3}M$ the parallel transport of $w_2$ to $p_3$.
\item 
$\gamma_4(s)$ the time-dependent geodesic such that 
$\gamma_4(t+\varepsilon) = p_3$ and $\gamma_4'(t+\varepsilon) = w_3$.
Let $p_4 = \gamma_4(t)$.
\end{itemize}
Then, defining $c(\varepsilon) = p_4$ for small $\varepsilon$,
$c$ is a smooth path that satisfies:
\begin{itemize}
\item 
$c(0) = p_0$,
\item 
$c'(0) = 0$,
\item 
in coordinates
$\frac 12 c''(0)$ 
has components
$-(\Gamma_{ij}^k - \Gamma_{ji}^k) v^iw^j - (A_i^k-B_i^k)(v^i - w^i)$.
\end{itemize}
\end{theorem}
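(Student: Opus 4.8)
The plan is to transpose, leg by leg, the classical torsion-loop argument of \cite{Burke85}, carrying out all of the analysis by a second-order Taylor expansion in $\varepsilon$ in a single coordinate chart around $p_0$, exactly as in the alternative route suggested for Proposition~\ref{prop:timedep-Lie-bracket}. Two expansions from Section~\ref{sec:parallel-transport} are the only inputs. For a geodesic $\sigma$ of $\dotnabla$ with $\sigma(s_\ast)=q$ and $\sigma'(s_\ast)=u$,
\begin{equation*}
\sigma^k(s_\ast+h)=q^k+h\,u^k-\frac{h^2}{2}\bigl(\Gamma_{ij}^k u^iu^j+C^k+(A_i^k+B_i^k)u^i\bigr)+O(h^3),
\end{equation*}
with the coefficients evaluated at $(s_\ast,q)$; and parallel transport of a vector $u$ along $\sigma$ over a parameter step $h$ changes it, to first order, by $-h\bigl(\Gamma_{ij}^k\dot\sigma^i u^j+C^k+A_i^k\dot\sigma^i+B_i^k u^i\bigr)$. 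Comparing the two displays shows that the velocity of a geodesic is itself parallel along it (the geodesic equation is the transport equation with the transported vector equal to $\gamma'$); hence $v_1=\gamma_1'(t+\varepsilon)$, and likewise the transported copy of any geodesic's own initial velocity is the geodesic velocity at the endpoint. This identity removes most of the bookkeeping, since two of the transports reduce to accelerations already computed.

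The structural reason the final formula is so short is worth isolating before computing. Every coefficient ($\Gamma$, $A$, $B$, $C$ and the various velocities) enters the $\varepsilon^2$-coefficient of $p_4$ either through an explicit $\varepsilon^2$ factor (the geodesic second-order term) or through an $\varepsilon$ factor multiplying a first-order transport increment. In either case only its leading value at $(t,p_0)$ survives, so the $O(\varepsilon)$ differences between the successive base points $p_i$ and between the times $t,t+\varepsilon,t+2\varepsilon$ affect only order $\varepsilon^3$; in particular no time-derivatives of $\Gamma,A,B$ can appear. Since at $\varepsilon=0$ all four legs are trivial, $c(0)=p_0$ is immediate.

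Now I would accumulate. Writing $p_4=p_0+\varepsilon\,V+\tfrac{\varepsilon^2}{2}\sum_{r=1}^4 a_r+O(\varepsilon^3)$ with $V=v_0+w_1-v_2-w_3$, the velocity sum has vanishing zeroth order, $v_0+w_0-v_0-w_0=0$, which already yields $c'(0)=0$; its first-order part $V^{(1)}$ is assembled from the transport increments along the four legs, with the caveat that legs $3$ and $4$ carry a parameter step $-\varepsilon$. The explicit second-order terms pair up: legs $1,3$ (direction $v_0$) and legs $2,4$ (direction $w_0$) give $\tfrac{\varepsilon^2}{2}\sum_r a_r=\varepsilon^2\bigl(a(v_0)+a(w_0)\bigr)$, where $a(u)^k=-\Gamma_{ij}^k u^iu^j-C^k-(A_i^k+B_i^k)u^i$. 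Adding $V^{(1)}$ to $a(v_0)+a(w_0)$, the three effects advertised in the theorem fall out: the symmetric quadratic $\Gamma$ terms cancel, leaving $\Gamma_{ij}^k(w_0^iv_0^j-v_0^iw_0^j)=-(\Gamma_{ij}^k-\Gamma_{ji}^k)v_0^iw_0^j$; the two surviving copies of $C$ in $V^{(1)}$ cancel against the single $C$ in each acceleration; and the endomorphism terms collapse to $-(A_i^k-B_i^k)(v_0^i-w_0^i)$. This is $\tfrac12 c''(0)$, as claimed.

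The only genuine difficulty is organizational: one must track three transported objects---$v$, $w$ and the geodesic velocities---across four legs with the correct $\pm\varepsilon$ step signs and distinct base points, consistently discarding the $O(\varepsilon^3)$ remainders with justification. The cancellations of $C$ and of the symmetric part of $\Gamma$ are then automatic, and the expansion of $V^{(1)}$ is the step most prone to sign errors, so I would compute it twice. One might hope to bypass the chart computation by running the loop for the connection $\widehat\nabla$ on $\R\times M$ and projecting with $\T\rho_2$ as in Theorem~\ref{th:nabladotCharacterization}; this is delicate, however, because the time-dependent geodesics used here are the sections $s\mapsto(s,\gamma(s))$ solving only the vertical equation and are \emph{not} $\widehat\nabla$-geodesics in general (their horizontal $\parder{}{t}$ component need not vanish). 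For this reason the direct second-order expansion is the cleanest route.
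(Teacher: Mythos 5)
Your proposal is correct and follows essentially the same route as the paper's proof: second-order Taylor expansions of the four geodesic legs and first-order expansions of the parallel transports in a single chart, followed by accumulation and cancellation of the symmetric $\Gamma$ and $C$ terms. The organizational observations you add (that a $\dotnabla$-geodesic's velocity is self-parallel, and that all coefficients may be frozen at $(t,p_0)$ up to $O(\varepsilon^3)$) are sound and merely make explicit what the paper's ``straightforward calculation'' leaves implicit.
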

\begin{proof}
Taylor expansions of the coordinate expressions for the geodesics are the following:
\begin{align*}
\gamma_1^k(s) &= p_0^k + v_0^k \cdot(s - t)
- \frac12 \left(\Gamma_{ij}^k\,v_0^i\,v_0^j + A^k_i\,v_0^i + B^k_i\,v_0^i + C^k\right) (s - t)^2
+ o((s-t)^2)\,,
\\
\gamma_2^k(s) &= p_1^k + w_1^k \cdot(s - t - \varepsilon)
- \frac12 \left(\Gamma_{ij}^k\,w_1^i\,w_1^j + A^k_i\,w_1^i + B^k_i\,w_1^i + C^k\right) (s - t - \varepsilon)^2
+ o((s-t)^2)\,,
\\
\gamma_3^k(s) &= p_2^k + v_2^k \cdot(s - t - 2\varepsilon)
- \frac12 \left(\Gamma_{ij}^k\,v_2^i\,v_2^j + A^k_i\,v_2^i + B^v_i\,v_2^i + C^k\right) (s - t - 2\varepsilon)^2
+ o((s-t)^2)\,,
\\
\gamma_4^k(s) &= p_3^k + w_3^k \cdot(s - t - \varepsilon)
- \frac12 \left(\Gamma_{ij}^k\,w_3^i\,w_3^j + A^k_i\,w_3^i + B^k_i\,w_3^i + C^k\right) (s - t - \varepsilon)^2
+ o((s-t)^2)\,.
\end{align*}
On the other hand, expressions for the parallel transported vectors are:
\begin{align*}
            v_1^k &= v_0^k
                - \left(\Gamma_{ij}^k\,v_0^i\,v_0^j + A^k_i\,v_0^i + B^k_i\,v_0^i + C^k\right) \varepsilon + o(\varepsilon)\,,
            \\
            w_1^k &= w_0^k
                - \left(\Gamma_{ij}^k\,v_0^i\,w_0^j + A^k_i\,v_0^i + B^k_i\,w_0^i + C^k\right) \varepsilon + o(\varepsilon)\,,
            \\
            v_2^k &= v_1^k
                - \left(\Gamma_{ij}^k\,w_1^i\,v_1^j + A^k_i\,w_1^i + B^k_i\,v_1^i + C^k\right) \varepsilon + o(\varepsilon)\,,
            \\
            w_2^k &= w_1^k
                - \left(\Gamma_{ij}^k\,w_1^i\,w_1^j + A^k_i\,w_1^i + B^k_i\,w_1^i + C^k\right) \varepsilon + o(\varepsilon)\,,
            \\
            w_3^k &= w_2^k
                + \left(\Gamma_{ij}^k\,v_2^i\,w_2^j + A^k_i\,v_2^i + B^k_i\,w_2^i + C^k\right) \varepsilon + o(\varepsilon)
            \,.
        \end{align*}
With these equations, it is a straightforward calculation to show that
$$
p_4^k = 
p_0^k - 
\left( 
   (\Gamma_{ij}^k - \Gamma_{ji}^k)\,v^i\,w^j + (A_i^k-B_i^k)(v^i - w^i)
\right)
\varepsilon^2 
+ o(\varepsilon^2)\,,
$$
from which the stated results follow.
\end{proof}

From the coordinate expression we notice that,
in the time-independent case,
we obtain the torsion tensor, with opposite sign.
It is also clear how to write the second derivative in a coordinate-independent 
way.
So we introduce the following definition:
\begin{definition}
The   
\dfn{time-dependent torsion operator}
of a time-dependent covariant derivative operator
$\dotnabla = (\nabla,C,A,B)$
is defined as
\begin{equation}
\label{eq:tdtorsion}
\mathcal{T}^\dotnabla (X, Y) = 
T^\nabla(X,Y) + (A - B)\,(X-Y)
\,,
\end{equation}
where $T^\nabla$ is the torsion tensor of the connection~$\nabla$
and $X,Y$ are arbitrary time-dependent vector fields on~$M$.
\end{definition}

Remarkably, this result can be seen as the vertical part of the torsion 
in the time-extended manifold.

\begin{proposition}
Consider a time-dependent covariant derivation 
$\dotnabla = (\nabla,C,A,B)$
on~$M$,
and $X,Y$ two time-dependent vector fields.
Let $\widehat\nabla$ be any connection on $\R \times M$ that extends $\dotnabla$,
and $T^{\widehat\nabla}$ its torsion tensor.
Let $\widetilde X, \widetilde Y$ be the suspensions of $X,Y$. 
Then
$$
\mathcal{T}^\dotnabla(X,Y) 
= 
\T(\rho_2) \circ T^{\widehat\nabla} (\widetilde X, \widetilde Y)
\,.
$$
\end{proposition}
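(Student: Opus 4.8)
The plan is to compute both sides directly from the structural formula for $\widehat\nabla$ established in Theorem~\ref{th:nabladotCharacterization}, and to observe that, once we project with $\T(\rho_2)$, every contribution coming from the ``horizontal'' data $(\lambda,\alpha,\beta,\varepsilon)$ of the chosen extension disappears. This is precisely what will make the right-hand side independent of the extension $\widehat\nabla$, as the statement requires. Concretely, I would begin by expanding the torsion of $\widehat\nabla$ on the suspensions,
\[
T^{\widehat\nabla}(\widetilde X,\widetilde Y) = \widehat\nabla_{\widetilde X}\widetilde Y - \widehat\nabla_{\widetilde Y}\widetilde X - [\widetilde X,\widetilde Y]\,,
\]
and then project each of the three terms onto $\T M$ separately.

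For the two covariant-derivative terms I would invoke the specialization of Theorem~\ref{th:nabladotCharacterization} to suspensions (the displayed formula just preceding the definition of $\dotnabla$), which gives that the vertical part of $\widehat\nabla_{\widetilde X}\widetilde Y$ is exactly $\dot Y + \nabla_X Y + C + A(X) + B(Y) = \dotnabla_X Y$, and symmetrically $\T(\rho_2)\circ\widehat\nabla_{\widetilde Y}\widetilde X = \dotnabla_Y X$. For the bracket term I would use the earlier proposition asserting $[\widetilde X,\widetilde Y] = \tdBracket{X,Y}$, together with the remark that this bracket is purely vertical; hence $\T(\rho_2)\circ[\widetilde X,\widetilde Y] = \tdBracket{X,Y} = [X,Y] + \dot Y - \dot X$.

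The concluding step is then a short cancellation. Substituting these three vertical parts and simplifying,
\[
\dotnabla_X Y - \dotnabla_Y X - \tdBracket{X,Y} = \bigl(\nabla_X Y - \nabla_Y X - [X,Y]\bigr) + (A-B)(X-Y)\,,
\]
where the two copies of $C$ cancel and the terms $\dot Y - \dot X$ coming from the covariant derivatives are exactly cancelled by those inside $\tdBracket{X,Y}$. The right-hand side is $T^\nabla(X,Y) + (A-B)(X-Y)$, which is $\mathcal{T}^\dotnabla(X,Y)$ by eq.~\eqref{eq:tdtorsion}.

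The argument is essentially mechanical, so the only point deserving care—and the closest thing to an obstacle—is making transparent \emph{why} the result does not depend on the extension $\widehat\nabla$: one must note that the horizontal data $(\lambda,\alpha,\beta,\varepsilon)$ feed only into the $\partial/\partial t$-component of $\widehat\nabla_{\widetilde X}\widetilde Y$, and that the Lie bracket of two suspensions carries no $\partial/\partial t$-component at all, so that $\T(\rho_2)$ annihilates every such contribution. Once this is observed, only the vertical data $(\nabla,C,A,B)$ survive, and the identity follows at once.
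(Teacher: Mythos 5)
Your proof is correct and follows essentially the same route as the paper: both rely on Theorem~\ref{th:nabladotCharacterization} to identify the vertical part of $\widehat\nabla_{\widetilde X}\widetilde Y$ as $\dotnabla_X Y$, on the fact that $[\widetilde X,\widetilde Y]=\tdBracket{X,Y}$ is purely vertical, and on the observation that $\T(\rho_2)$ annihilates the horizontal data $(\lambda,\alpha,\beta,\varepsilon)$ of the extension. The only cosmetic difference is that you project each of the three torsion terms separately (in effect establishing the identity $\mathcal{T}^\dotnabla(X,Y)=\dotnabla_XY-\dotnabla_YX-\tdBracket{X,Y}$ along the way, which the paper records as a separate subsequent proposition), whereas the paper writes the full horizontal-plus-vertical expression for $T^{\widehat\nabla}(\widetilde X,\widetilde Y)$ in one step before projecting.
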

\begin{proof}
This result follows from the following calculation,
that uses Theorem~\ref{th:nabladotCharacterization}:
\begin{align*}
T^{\widehat\nabla} (\widetilde X, \widetilde Y) 
&=
\widehat\nabla_{\widetilde X} \widetilde Y -
\widehat\nabla_{\widetilde X} \widetilde Y - 
[ \widetilde X, \widetilde Y]
\\
&=
\left( \strut
(\alpha-\beta)(X-Y) + \varepsilon(X,Y)-\varepsilon(Y,X) 
\right) 
\parder{}{t} 
+
T^\nabla(X,Y) + 
(A - B)\,(X-Y)\,.
\end{align*}
\end{proof}

Finally, we will give a third construction of the torsion operator.
\begin{proposition}
Given time-dependent vector fields $X,Y$,
the time-dependent torsion operator on them can be expressed as
\begin{equation}
\mathcal{T}^\dotnabla(X,Y) = 
\dotnabla_XY - \dotnabla_YX - \tdBracket{X, Y}\,
.
\end{equation}
\end{proposition}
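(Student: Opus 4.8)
The plan is to prove this by direct substitution of the three defining formulas and checking that the terms reassemble into the torsion operator. I would expand $\dotnabla_XY$ and $\dotnabla_YX$ using the definition \eqref{eq:tdcovariant}, expand $\tdBracket{X,Y}$ using \eqref{eq:tdbracket}, and then collect like terms.

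First I would compute the antisymmetrized covariant derivation. Writing out both terms and subtracting, one gets
\begin{equation*}
\dotnabla_XY - \dotnabla_YX
= (\dot Y - \dot X) + (\nabla_XY - \nabla_YX) + (A-B)(X-Y)\,.
\end{equation*}
Here the key observations are that the vector field $C$ occurs once in each of $\dotnabla_XY$ and $\dotnabla_YX$ and hence cancels under the difference, and that the endomorphism contributions $A(X)-A(Y)$ and $B(Y)-B(X)$ combine by $\Cinfty$-linearity into $(A-B)(X-Y)$.

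The next step is to subtract the time-dependent Lie bracket. Since $\tdBracket{X,Y} = [X,Y] + \dot Y - \dot X$, the derivative terms $\dot Y - \dot X$ cancel exactly against those produced above, leaving
\begin{equation*}
\dotnabla_XY - \dotnabla_YX - \tdBracket{X,Y}
= \left(\nabla_XY - \nabla_YX - [X,Y]\right) + (A-B)(X-Y)\,.
\end{equation*}
The first parenthesis is by definition the torsion tensor $T^\nabla(X,Y)$ of the time-dependent connection $\nabla$, so by \eqref{eq:tdtorsion} the right-hand side is precisely $\mathcal{T}^\dotnabla(X,Y)$, which finishes the argument.

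There is essentially no obstacle here; this is a routine algebraic verification. The only point worth flagging is that the identity hinges on the particular (affine rather than linear) shape of $\tdBracket{\cdot,\cdot}$: it is exactly the $\dot Y - \dot X$ correction built into the time-dependent bracket that absorbs the derivative terms introduced by the $\dot Y$ and $\dot X$ appearing in $\dotnabla$, while the symmetric term $C$ is eliminated by the antisymmetrization. This is consistent with, and reinforces, the earlier remark that the natural geometric objects in the time-dependent setting are the suspensions.
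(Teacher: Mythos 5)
Your proof is correct and follows exactly the route the paper takes: the paper's proof simply states that the identity is immediate from substituting the defining expressions \eqref{eq:tdtorsion}, \eqref{eq:tdcovariant} and \eqref{eq:tdbracket}, which is precisely the computation you carry out in detail. Your explicit tracking of the cancellation of $C$ under antisymmetrization and of $\dot Y - \dot X$ against the correction term in $\tdBracket{X,Y}$ is a welcome elaboration but not a different argument.
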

\begin{proof}
It is immediate from the expressions of each of its terms:
\eqref{eq:tdtorsion},
\eqref{eq:tdcovariant},
\eqref{eq:tdbracket}.
\end{proof}

In other words, 
the expression for time-dependent torsion operator is the same as for the usual torsion tensor, 
but replacing each term for its time-dependent version.

\medskip

The properties of this operator are somewhat different 
from the usual torsion tensor.
For instance, 
$\mathcal{T}^{\dotnabla}$ is a tensor field
if and only if $A=B$
(moreover, in this case 
$\mathcal{T}^\dotnabla(X, Y)$ matches the value of 
$T^\nabla(X, Y)$).
In addition:
\begin{proposition}
\label{prop:torsionlessEquivalence}
$\mathcal{T}^{\dotnabla} = 0$
if, and only if,
the time-dependent connection $\nabla$ is torsionless and $A=B$.
\end{proposition}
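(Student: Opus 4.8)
The plan is to argue directly from the explicit formula in equation~\eqref{eq:tdtorsion}, namely $\mathcal{T}^{\dotnabla}(X,Y) = T^\nabla(X,Y) + (A-B)(X-Y)$, exploiting the structural difference between its two summands. At each fixed time the torsion $T^\nabla$ is a genuine tensor: it is $\Cinfty(M)$-bilinear and antisymmetric, hence vanishes as soon as one of its arguments is the zero field. The endomorphism term $(A-B)(X-Y) = (A-B)(X) - (A-B)(Y)$ does not share this property, since $A-B$ is a (time-dependent) endomorphism of $\T M$ acting linearly on a single slot. Separating the two terms by a suitable test substitution is the whole content of the argument.

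The $(\Leftarrow)$ implication is immediate: if ${}^t\nabla$ is torsionless then $T^\nabla = 0$, and if $A = B$ then $A - B = 0$; feeding both into \eqref{eq:tdtorsion} gives $\mathcal{T}^{\dotnabla}(X,Y) = 0$ for every pair $X,Y$.

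For $(\Rightarrow)$, I would assume $\mathcal{T}^{\dotnabla} = 0$ as an operator, i.e.\ $\mathcal{T}^{\dotnabla}(X,Y) = 0$ for all time-dependent vector fields $X,Y$, and first substitute the zero field $Y = 0$. Since $T^\nabla$ is tensorial, $T^\nabla(X,0) = 0$, so \eqref{eq:tdtorsion} collapses to $(A-B)(X) = 0$ for every $X$; as $X$ is arbitrary and $A-B$ is an endomorphism of $\T M$, this forces $A = B$. With $A = B$ now in hand, the endomorphism term drops out of \eqref{eq:tdtorsion} for all $X,Y$, leaving $T^\nabla(X,Y) = 0$ for all $X,Y$, which is precisely the assertion that each connection ${}^t\nabla$ is torsionless.

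There is no serious obstacle here; the only point deserving care is the legitimacy of the substitution $Y = 0$, which is fine because the zero field is an admissible time-dependent vector field and because, at each fixed $t$, the data $T^{{}^t\nabla}$ and $A_t - B_t$ are honest tensorial/endomorphism objects on $M$, so the identity may be tested pointwise in $(t,p)$. As an alternative to plugging in $Y=0$, one could replace $X$ by $fX$ with $f \in \Cinfty(M)$ and compare degrees of homogeneity to isolate the endomorphism term, but the vanishing-argument substitution is the most direct route.
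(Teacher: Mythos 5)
Your proposal is correct and follows essentially the same route as the paper: the reverse implication is immediate from the formula \eqref{eq:tdtorsion}, and the forward implication is obtained by substituting $Y=0$ to isolate $(A-B)(X)=0$ (using tensoriality of $T^\nabla$), whence $A=B$ and then $T^\nabla=0$. Your added remarks on why the substitution $Y=0$ is legitimate only make the argument more explicit than the paper's.
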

\begin{proof}
We take
$
\mathcal{T}^\dotnabla(X, Y) =
T^{\nabla} (X,Y) + (A{-}B) (X{-}Y)
$.
It is clear that
$T^{\dotnabla} = 0$
if $T^\nabla=0$ and $A=B$.
For the direct implication,
putting $Y=0$ in the right-hand side of the equation yields $A-B=0$.
Then $T^{\nabla} (X,Y) = 0$ 
for every $X,Y$.
\end{proof}

As a final remark,
notice that the time-dependent covariant derivation associated with a 
time-dependent Riemannian metric satisfies $A=B$,
therefore 
$T^\dotnabla$ is tensorial and coincides with $T^\nabla=0$.

%%%%%%%%%%%%%%%%%%%%%%%%%%%%%%%%%%%%%%%%%%%%%%%%%%%%%%%%%%%%%%%%
%%%%%%%%%%%%%%%%%%%%%%%%%%%%%%%%%%%%%%%%%%%%%%%%%%%%%%%%%%%%%%%%
\section{Example: double pendulum with variable masses}
\label{sec:example}
%%%%%%%%%%%%%%%%%%%%%%%%%%%%%%%%%%%%%%%%%%%%%%%%%%%%%%%%%%%%%%%%
%%%%%%%%%%%%%%%%%%%%%%%%%%%%%%%%%%%%%%%%%%%%%%%%%%%%%%%%%%%%%%%%

Consider a double pendulum in a vertical plane,
with two \emph{time-varying masses} $m_1$ and~$m_2$.
The first mass is connected to a fixed point in the origin of coordinates
through a massless rod with length~$\ell_1$,
and the second mass is connected to the first one through a massless rod with length $\ell_2$.
Let $\phi_1$ and $\phi_2$ be the angle of both rods with respect to an upwards vertical vector. 
Then $(\phi_1,\phi_2)$ are coordinates of the system.

The kinetic energy of the system $T = \dfrac12 g(\dot\phi,\dot\phi)$
is given by the metric with matrix
$$
\mathbf{G} =
\begin{pmatrix}
    \ell_1^2 \,(m_1 {+} m_2) & 
    \ell_1 \ell_2 m_2 \cos(\phi_1{\shortmin}\phi_2) 
    \\
    \ell_1 \ell_2 m_2 \cos(\phi_1{\shortmin}\phi_2) & 
    \ell_2^2 \,m_2 \\
\end{pmatrix}
\,.
$$
From this we find the inverse and the time-derivative of the metric to be
$$
\mathbf{G}^{-1} = 
\frac1{\ell_1^2 \,\ell_2^2 \,m_2 
\left( m_1 + m_2 \sin^2(\phi_1{\shortmin}\phi_2) \right)}
\begin{pmatrix}
    \ell_2^2 \,m_2 & 
    -\ell_1\ell_2 m_2 \cos(\phi_1{\shortmin}\phi_2) 
    \\
    -\ell_1\ell_2 m_2 \cos(\phi_1{\shortmin}\phi_2) & 
    \ell_1^2 \,(m_1{+}m_2) \\
\end{pmatrix}
$$
and
$$
\dot {\mathbf{G}} = 
\frac{\dot m_2}{m_2} \, \mathbf{G}
+ \ell^2_1 \,\frac{\dot m_1 m_2 - m_1\dot m_2}{m_1}
\begin{pmatrix}  1 & 0 \\ 0 & 0\end{pmatrix}
\,.
$$
Their product is
$$
\mathbf{G}^{-1}  \dot {\mathbf{G}} = 
\frac{\dot m_2}{m_2} \, \Id
+ \frac{\dot m_1 m_2 - m_1 \dot m_2}{m_1\left(m_1+ m_2 \sin^2(\phi_1{\shortmin}\phi_2)\right)}
\begin{pmatrix}
    1 & 0 \\
    -\frac{\ell_1}{\ell_2}\cos(\phi_1{\shortmin}\phi_2) & 0
\end{pmatrix}
\,.
$$

With respect to the Christoffel symbols, the non-zero ones are:
\begin{align*}
    \Gamma_{11}^1 &= \frac{m_2 \cos(\phi_1{\shortmin}\phi_2) \sin(\phi_1{\shortmin}\phi_2)}{m_1 + m_2 \sin^2(\phi_1{\shortmin}\phi_2)}\,,
      & 
    \Gamma_{22}^1 &= \frac{\ell_2 m_2 \sin(\phi_1{\shortmin}\phi_2)}{\ell_1 \left(m_1+ m_2 \sin^2(\phi_1{\shortmin}\phi_2)\right)}\,,
    \\
    \Gamma_{11}^2 & = -\frac{\ell_1 \left(m_1 {+}m_2\right) \sin(\phi_1{\shortmin}\phi_2)}{\ell_2 \left(m_1+ m_2 \sin^2(\phi_1{\shortmin}\phi_2)\right)}\,,
      &
    \Gamma_{22}^2 &= -\frac{m_2 \cos(\phi_1{\shortmin}\phi_2) \sin(\phi_1{-}\phi_2)}{m_1 + m_2 \sin^2(\phi_1{\shortmin}\phi_2)}\,.
\end{align*}

In case there exists a conservative force given by a potential function $V$ acting on the system, 
the equations of motion are
$$
\ddot\phi_k + \Gamma_{ij}^k\dot\phi_i\dot\phi_j + g^{kl}\dot g_{li}\dot\phi_i
= -g^{ki}\left[ V\right]_i
\,,
$$
where $\left[V\right]$ is the Euler--Lagrange operator applied to~$V$.
When $V$ is zero we obtain the geodesics as defined by
equation~\eqref{eq:geodesic-coordinates},
\begin{align*}
\ddot\phi_1 &=
- \frac{m_2 \sin(\phi_1{\shortmin}\phi_2)}{\ell_1\left(m_1 + m_2\sin^2(\phi_1{\shortmin}\phi_2)\right)}
        \left(
            \ell_1\cos(\phi_1{\shortmin}\phi_2)\dot\phi_1^2 + \ell_2\dot\phi_2^2
        \right) \\
&\quad - \left(
        \frac{\dot m_2}{m_2}
        + \frac{\dot m_1 m_2 - m_1 \dot m_2}{m_1\left(m_1+ m_2 \sin^2(\phi_1{\shortmin}\phi_2)\right)}
    \right) \dot\phi_1\,,
\\
\ddot\phi_2 &=
\frac{\sin(\phi_1{\shortmin}\phi_2)}{\ell_2\left(m_1 + m_2\sin^2(\phi_1{\shortmin}\phi_2)\right)}
        \left(
            \ell_1(m_1 + m_2)\dot\phi_1^2 + \ell_2 m_2 \cos(\phi_1{\shortmin}\phi_2)\dot\phi_2^2
        \right)
- \frac{\dot m_2}{m_2}\dot\phi_2
\end{align*}

Lastly we compute the time-derivative of the 
Levi-Civita connection,
which turns out to be non-zero.
Denoting by $W = m_1 \dot m_2 - m_1 \dot m_2$,
we have:
\begin{multline*}
\dot\Gamma 
=
\frac
{W \sin(\phi_1 {\shortmin} \phi_2)}
{\,\ell_1 \ell_2  
\left( m_1 + m_2 \sin^2(\phi_1{\shortmin}\phi_2) \right)^2\,}
\Bigg( 
\ell_1 \ell_2 \cos(\phi_1 {\shortmin} \phi_2) 
\left(
\parder{}{\phi_1} \otimes \d \phi_1 \otimes \d \phi_1 -
\parder{}{\phi_2} \otimes \d \phi_2 \otimes \d \phi_2
\right)
+
\\
-
\ell_1^2 \cos^2(\phi_1 {\shortmin} \phi_2) \,
\parder{}{\phi_2} \otimes \d \phi_1 \otimes \d \phi_1 
+
\ell_2^2 \,
\parder{}{\phi_1} \otimes \d \phi_2 \otimes \d \phi_2
\Bigg)\,.
\end{multline*}

%%%%%%%%%%%%%%%%%%%%%%%%%%%%%%%%%%%%%%%%%%%%%%%%%%%%%%%%%%%%%%%%
%%%%%%%%%%%%%%%%%%%%%%%%%%%%%%%%%%%%%%%%%%%%%%%%%%%%%%%%%%%%%%%%
\section{Conclusions and further research}
%%%%%%%%%%%%%%%%%%%%%%%%%%%%%%%%%%%%%%%%%%%%%%%%%%%%%%%%%%%%%%%%
%%%%%%%%%%%%%%%%%%%%%%%%%%%%%%%%%%%%%%%%%%%%%%%%%%%%%%%%%%%%%%%%

The main goal of this paper was to develop a suitable framework for the geometric description of geodesics associated with a time-dependent Riemannian metric. 
To this end, we introduced the notions of time-dependent Riemannian metrics and time-dependent connections. 
We derived the equations for geodesics through two different approaches: 
by extremizing the energy functional associated with a time-dependent Riemannian metric, 
and by projecting the geodesic equations of a suitable connection defined on the product manifold $\mathbb{R} \times M$. 
Along the way, we introduced the concept of time-dependent covariant derivation operators, 
their action on vector fields, the notion of parallel transport, and the torsion operator associated with such connections.

There are several directions for further research. A natural next step is to develop the notions of curvature for the time-dependent covariant derivation operator. 
Another interesting subject is the study of the second variation of the energy functional, which could yield information about the stability of geodesics in this time-dependent setting. 
Finally, it would be valuable to analyze more examples of physical interest, such as time-dependent metrics arising in general relativity or analytical mechanics.

\appendix 
%%%%%%%%%%%%%%%%%%%%%%%%%%%%%%%%%%%%%%%%%%%%%%%%%%%%%%%%%%%%%%%%
\section{Appendix: products and pullbacks}\label{app:products}
%%%%%%%%%%%%%%%%%%%%%%%%%%%%%%%%%%%%%%%%%%%%%%%%%%%%%%%%%%%%%%%%
%%%%%%%%%%%%%%%%%%%%%%%%%%%%%%%%%%%%%%%%%%%%%%%%%%%%%%%%%%%%%%%%

In this appendix we gather some elementary results as a reminder for the reader and also to fix notation.

Let $\pi \colon E \to M$ be a vector bundle,
and $f \colon N \to M$ a map.
The \emph{pullback} of~$\pi$ by~$f$ is the vector bundle 
$f^*(E) \to N$
whose total space is the fibre product 
$
N \times_f E = 
\{(y,v) \in N \times E \mid f(x) = \pi(v) \}
$
and whose fibres are 
$f^*(E)_y = E_{f(y)}$.
A section of $f^*(E)$ is identified with a map
$\sigma \colon N \to E$ such that 
$\pi \circ \sigma = f$,
that is,
a section of~$E$ along~$f$.

This is of particular interest when the vector bundle is the tangent bundle
$\tau_M \colon \T M \to M$ of~$M$,
whose (smooth) sections are the 
$\Cinfty(M)$-module of the 
tangent vector fields on~$M$,
$\vf(M) = \Sec(\T M)$.
So, a \emph{vector field along} 
$f \colon N \to M$
is just a map 
$V \colon N \to \T M$ 
such that, for every $y \in N$, $V_y \in \T_{f(y)}M$.
The set of such maps will be denoted as 
$\vf(f)$, and is a $\Cinfty(N)$-module.
Prominent examples of vector fields along maps are 
the velocity $\gamma'$ of a path $\gamma \colon I \to M$,
the Frenet frame in Riemannian geometry, 
or variation fields in variational calculus.

A connection on a manifold is a linear connection on its tangent bundle.
We assume that these concepts are well-known to the reader.
Nevertheless, let us remark that
if one has a map $f \colon N \to M$
and there is a linear connection on a vector bundle $E \to M$,
this induces a linear connection on the pullback bundle 
$f^*(E) \to N$.
The most important case is that of 
a connection on a manifold~$M$ and a map (path) 
$\gamma \colon \R \to M$;
then the covariant derivative of 
$w \in \vf(\gamma) \cong \Sec(\gamma^*(\T M))$
with respect to the unit vector field of~$\R$,
$\nabla_{\!\totder{}{t}} w$,
will be simply denoted as
$\nabla_{\!t} w$;
this is the well-known notion of covariant derivation along a path.

Now we consider a product manifold and its projections,
$\rho_1 \colon M_1 \times M_2 \to M_1$ 
and analogously $\rho_2$.
Given any $x = (x_1,x_2) \in M_1 \times M_2$
we have an isomorphism
$$
(\T_x \rho_1,\T_x \rho_2) \colon
\T_{x} (M_1 \times M_2) \xlongrightarrow{\cong}
\T_{x_1}M_1 \times \T_{x_2}M_2 \cong 
\T_{x_1}M_1 \oplus \T_{x_2}M_2 
\,;
$$
we can write the elements of this tangent space 
as ordered pairs or as sums of vectors of each subspace.
When we carry this to sections,
a vector field $X$ on $M_1 \times M_2$ 
(for instance)
can be uniquely written as 
$X = X_1 + X_2$,
where $X_i \in \vf(\rho_i)$
---indeed $X_i = \T \rho_i \circ X$.
Therefore
$$
\vf(M_1 \times M_2) = \vf(\rho_1) \oplus \vf(\rho_2) \,.
$$
In particular, 
any vector field on $M_i$ yields in a trivial way a vector field on the product.

We are especially interested in the case $\R \times M$.
The unit vector field $\totder{}{t}$ of $\R$
yields a vector field in the product that we will denote simply as 
$\parder{}{t}$.
A \emph{time-dependent vector field} on~$M$ is a vector field $X$ along 
$\rho_2 \colon \R \times M \to M$,
and therefore it defines a vector field on the product.
Then we can construct the \emph{suspension} of~$X$ as the vector field
$$
\widetilde X = \parder{}{t} + X\,.
$$

Along the paper we make use of a number of sections of vector bundles $E \to M$ depending on a parameter $t \in \R$,
$\sigma_t$
that is,
sections $\sigma$ along $\rho_2 \colon \R \times M \to M$,
with $\sigma_t = \sigma(t,\cdot)$.
So each vector space $E_x$ we have a path 
$\sigma(\cdot,x) \colon 
t \mapsto E_x$.
This path has a derivative
(in the sense of basic calculus)
that we will denote by $\dot\sigma(t,x)$;
thus 
$\dot\sigma$ is also a section along~$\rho_2$.
It is clear that, in vector bundle coordinates,
the components of $\dot\sigma$
are computed simply as the partial derivatives with respect to~$t$.
The simplest example is the partial time derivative 
$\dot f = \parder{f}{t}$ of a function.
Then,
if $X$ is a time-dependent vector field on~$M$,
$$
\Lie_{\widetilde X} f = \dot f + \Lie_X f\,.
$$
As another example, one can easily check the Lie bracket
$\displaystyle
\left[ \parder{}{t} , X \right] = \dot X
$.
The right-hand side of this equality belongs to 
$\vf(\rho_2)$,
while the left-hand side is computed in
$\vf(\R \times M)$.

%%%%%%%%%%%%%%%%%%%%%%%%%%%%%%%%%%%%%%%%%%%%%%%%%%%%%%%%%%%%%%%%
%%%%%%%%%%%%%%%%%%%%%%%%%%%%%%%%%%%%%%%%%%%%%%%%%%%%%%%%%%%%%%%%
\section{Appendix: flow of a time-dependent vector field}\label{app:time-dep-flow}
%%%%%%%%%%%%%%%%%%%%%%%%%%%%%%%%%%%%%%%%%%%%%%%%%%%%%%%%%%%%%%%%
%%%%%%%%%%%%%%%%%%%%%%%%%%%%%%%%%%%%%%%%%%%%%%%%%%%%%%%%%%%%%%%%

In this section we recall some properties of time-dependent flows.
We omit most details
---see for instance \cite{Lee_2013,MR_07}.

If $X$ is a time-dependent vector field on~$M$,
we denote by 
$
\Flow_X(t,t_\circ,p) 
$
its \emph{time-dependent flow},
that is, 
the integral curve of $X$ with initial condition $(t_\circ,p)$.
This time-dependent flow is closely related to 
the ordinary flow $\widetilde \Flow_{\widetilde X}$ of the suspension $\widetilde X$ of~$X$. 
Indeed,
$\widetilde \Flow_{\widetilde X}(t-t_\circ;t_\circ,p) = (t,\Flow_X(t,t_\circ,p))$.

Its main properties are:
\begin{equation}
\partial_1 \Flow_X(t,t_\circ,p) = X(t,\Flow_X(t,t_\circ,p))
\,,
\end{equation}
where 
$
\partial_1 \Flow_X(t,t_\circ,p)
$
denotes the tangent vector with respect to the first variable,
and the group law 
\begin{equation}
\Flow_X^{t,t} = \Id_M
\,, \qquad 
\Flow_X^{t'',t'} \circ \Flow_X^{t',t} = \Flow_X^{t'',t}
\,,
\end{equation}
where $\Flow^{t,s} = \Flow(t,s,\cdot)$.

Similar to $\partial_1 \Flow_X$ we are interested in the tangent vector
$\partial_2 \Flow_X$ with respect to the second scalar variable.
It can be expressed as
$$
\partial_2 \Flow_X(t,t_\circ,p) =
- \T \Flow_X^{t,t_\circ}(p) \cdot X(t_\circ, p)
\,.
$$

Consider a time-dependent function
$f \colon \R \times M \to \R$.
Analogously to the Lie derivative,
we can evaluate its rate of change along integral curves of~$X$;
the result is
$$
\restr{\frac{\d}{\d t}}{t=t_1} f(t,\Flow_X^{t, t_\circ}(p)) =
(\Flow_X^{t_1, t_\circ})^*
\left(
\restr{\parder{f}{t}}{t=t_1} \!\!\!\! + \Lie_{X} f
\right) (t_1,p)
\,,
$$
where 
$\Lie_{X} f$
is the ordinary Lie derivative but with frozen time.
A similar expression can be given for the derivative of a 
time-dependent tensor field.

%%%%%%%%%%%%%%%%%%%%%%%%%%%%%%%%%%%%%%%%%%%%%%%%%%%%%%%%%%%%%%%%
\section*{Acknowledgements}
%%%%%%%%%%%%%%%%%%%%%%%%%%%%%%%%%%%%%%%%%%%%%%%%%%%%%%%%%%%%%%%%

We acknowledge the financial support of the 
Ministerio de Ciencia, Innovaci\'on y Universidades (Spain), 
projects PID2021-125515NB-C21, and RED2022-134301-T of AEI,
and the
Ministry of Research and Universities of the Catalan Government, 
project 2021 SGR 00603, {Geometry of Manifolds and Applications} (GEOMVAP).

We dedicate this article to the memory of our beloved colleague 
Prof.\ Miguel C. Mu\~noz-Lecanda, 
who undoubtedly would have illuminated this work with his comments and wisdom.

%%%%%%%%%%%%%%%%%%%%%%%%%%%%%%%%%%%%%%%%%%%%%%%%%%%%%%%%%%%%%%%%
%%%%%%%%%%%%%%%%%%%%%%%%%%%%%%%%%%%%%%%%%%%%%%%%%%%%%%%%%%%%%%%%

\bibliographystyle{abbrv}
{
\small
\bibliography{references.bib}

@book{MR_07,
    author = {Jerrold E. Marsden and Tudor Ratiu},
    title = {{Manifolds, Tensor Analysis and Applications}},
    year = {2007},
    publisher = {Springer-Verlag},
    address = {New York},
    edition = {3rd},
    series = {Applied Mathematical Sciences},
    volume = {1975},
    note = {\href{https://doi.org/10.1007/978-1-4612-1029-0}{10.1007/978-1-4612-1029-0}}
}

@book{KMS_93,
    title = {{Natural Operations in Differential Geometry}},
    author = {Ivan Kola\v{r} and Peter W. Michor and Jan Slovák},
    isbn = {3540562354, 9783540562351},
    series = {Graduate Texts in Mathematics},
    year = {1993},
    publisher = {Springer-Verlang, Berlin},
    note = {\href{https://doi.org/10.1007/978-3-662-02950-3}{10.1007/978-3-662-02950-3}}
}

@book{Lee_2013,
    title = {{Introduction to smooth manifolds}},
    author = {John M. Lee},
    isbn = {9780387217529},
    lccn = {2002070454},
    series = {Graduate Texts in Mathematics},
    volume = {218},
    year = {2013},
    edition = {2nd},
    publisher = {Springer},
    note = {\href{http://doi.org/10.1007/978-1-4419-9982-5}{10.1007/978-1-4419-9982-5}}
}

@book{Lee_18,
    title = {{Introduction to Riemannian manifolds}},
    author = {John M. Lee},
    series = {Graduate Texts in Mathematics},
    volume = {176},
    edition = {2nd},
    year = {2018},
    publisher = {Springer Cham},
    note = {\href{https://doi.org/10.1007/978-3-319-91755-9}{10.1007/978-3-319-91755-9}}
}

@book{Burke85,
    place={Cambridge},
    title={Applied Differential Geometry},
    publisher={Cambridge University Press},
    author={Burke, William L.},
    year={1985},
    note = {\href{https://doi.org/10.1017/CBO9781139171786}{10.1017/CBO9781139171786}}
}

@book{Spivak1,
    author = {Michael Spivak},
    title = {{A comprehensive introduction to differential geometry, vol.~1}},
    edition = {3rd},
    publisher = {Publish or Perish, Houston},
    year = {1999},
}

@article{RaSh_2006,
    author = {Subhankar Ray and J. Shamanna},
    title = {{On virtual displacement and virtual work in Lagrangian dynamics}},
    journal = {Eur. J. Phys.},
    volume = {{\bf 27}},
    pages = {311--329},
    year = {2006},
    note = {\href{https://doi.org/10.1088/0143-0807/27/2/014}{10.1088/0143-0807/27/2/014}}
    }

@book{Top_2006,
    title = {{Lectures on the Ricci Flow}},
    author = {Peter Topping},
    year = {2006},
    publisher = {Cambridge University Press},
    note = {\href{https://doi.org/10.1017/CBO9780511721465}{10.1017/CBO9780511721465}},
    series = {London Mathematical Society Lecture Notes Series},
    volume = {325}
}
}

%%%%%%%%%%%%%%%%%%%%%%%%%%%%%%%%%%%%%%%%%%%%%%%%%%%%%%%%%%%%%%%%
%%%%%%%%%%%%%%%%%%%%%%%%%%%%%%%%%%%%%%%%%%%%%%%%%%%%%%%%%%%%%%%%
%%%%%%%%%%%%%%%%%%%%%%%%%%%%%%%%%%%%%%%%%%%%%%%%%%%%%%%%%%%%%%%%
%%%%%%%%%%%%%%%%%%%%%%%%%%%%%%%%%%%%%%%%%%%%%%%%%%%%%%%%%%%%%%%%
\end{document}